\documentclass[11pt,twoside,dvipsnames]{amsart}
\usepackage[utf8]{inputenc}
\usepackage[english]{babel}
\usepackage[T1]{fontenc}
\usepackage{xcolor}
\usepackage{amssymb}
\usepackage{geometry}
\usepackage[bookmarks=true]{hyperref}
\usepackage{mathptmx}

\def\*#1{\relax\ifmmode\if\noexpand #1\relax \mathop{\kern
		0pt^*{#1}}\nolimits\else \kern 0pt^*\!#1 \fi\else$^*$#1\fi}

\newtheorem{Theorem}{Theorem}[section]
\newtheorem{Lemma}[Theorem]{Lemma}

\newtheorem{Proposition}[Theorem]{Proposition}

\theoremstyle{definition}

\newtheorem{Definition}[Theorem]{Definition}
\newtheorem{Example}[Theorem]{Example}
\def\NZQ{\mathbb}
\def\ZZ{{\NZQ Z}}
\def\FF{{\NZQ F}}
\def\RR{{\NZQ R}}
\def\mm{{\mathfrak m}}
\def\nn{{\mathfrak n}}
\def\pp{{\mathfrak p}}
\def\qq{{\mathfrak q}}
\def\Hom{\operatorname{Hom}}
\def\Ext{\operatorname{Ext}}
\def\Ann{\operatorname{Ann}}
\def\Ker{\operatorname{Ker}}
\def\Supp{\operatorname{Supp}}
\def\hht{\operatorname{height}}
\def\pd{\operatorname{projdim}}
\def\id{\operatorname{injdim}}
\def\rank{\operatorname{rank}}
\def\depth{\operatorname{depth}}
\let\iso=\cong
\let\Dirsum=\bigoplus
\let\phi=\varphi
\def\cM{{\mathcal M}}

\begin{document}
	
\title{$\ZZ^r$-graded rings and their canonical modules}
\author[M. Barile]{Margherita Barile}
\address{Margherita Barile\\ Universit\` a degli Studi di Bari Aldo Moro\\ Dipartimento di Matematica\\ 70125 Bari\\ Italy}
\email{margherita.barile@uniba.it}
\author[w. Bruns]{Winfried Bruns}
\address{Winfried Bruns\\ Universit\"at Osnabr\"uck\\ Institut für Mathematik\\ 49069 Os\-na\-br\"uck\\ Germany}
\email{wbruns@uos.de}

\begin{abstract}
In ``Cohen--Macaulay rings'' Bruns and Herzog define the graded canonical module for $\ZZ$-graded rings. We generalize the definition to multigradings and prove that the canonical module ``localizes''. As an application, we give a divisorial proof of the theorem of Danilov and Stanley on the canonical module of affine normal monoid rings. Along the way, we develop the basic theory of multigraded rings and modules.
\end{abstract}

\keywords{multigrading, canonical module, normal affine monoid ring}

\subjclass[2020]{13C14, 13C15, 13C70, 13F05}
\maketitle

The original motivation of this article was to find a divisorial proof of the theorem of Danilov and Stanley that describes the canonical module of normal affine monoid rings (for example, see Bruns and Herzog \cite[Chapter 6]{BH}). It becomes readily clear that such a proof requires that the multigraded canonical module ``localizes''.  This will be proved in Section \ref{canonical}, after  the introduction of the $\ZZ^r$-graded canonical module. The application to graded canonical modules of normal affine monoid rings is given in Section \ref{DanStan}.

As in \cite{BH}, we use the \*notation of Fossum and Foxby \cite{FF} that indicates properties and invariants defined in the category of graded rings and modules. A \*local graded ring is a graded ring whose homogeneous nonunits generate a proper ideal. For $\ZZ$-graded \*local rings the \*canonical module was introduced by Jürgen Herzog in \cite[Section 3.6]{BH}. Jürgen  died in April 2024. The canonical module was the theme of his life, from the 1971 lecture notes \cite{HK} with Kunz to 2023 preprints in \url{arXiv.org}.  This article is a tribute to J\"urgen, a master of commutative algebra, a teacher for the first author, for the second a coauthor and friend.

We use the opportunity to give an expository style introduction to $\ZZ^r$-graded rings in Sections \ref{basic}--\ref{HomProps}  , following the development in \cite{BH}. This includes Krull dimension and chains of prime ideals, results of Matijevic--Roberts type \cite{MR}, as well as homological properties and invariants. Several of these results can also be found in Goto and Watanabe \cite{GW}. We do not claim originality for them. We have not included local duality. It is left to the reader. Generalizing it from $\ZZ$-gradings in \cite[Section 3.6]{BH} to $\ZZ^r$-gradings is an excellent training.

For unexplained terminology and symbols, we refer the reader to \cite{BH}.

\section{Basic notions and results}\label{basic}

For readers that are not familiar with the category of $\ZZ^r$-graded modules we briefly recapitulate some definitions. Let $R$ be a $\ZZ^r$-graded ring: $R =\bigoplus_{g\in\ZZ^r} R_g$, where the homogeneous components $R_g$ are $R_0=R_{(0, \dots, 0)}$-modules and $R_gR_h\subset R_{g+h}$ for all $g, h\in\ZZ^r$. A $\ZZ^r$-graded $R$-module $M$ has a decomposition $M=\bigoplus_{h\in \ZZ^r} M_h$ such that $R_gM_h\subset M_{g+h}$ for all $g,h\in \ZZ^r$. The elements of $R_g$ and $M_g$, respectively, have \emph{degree $g$}. Homomorphisms of $\ZZ^r$-graded rings and modules map homogeneous elements to homogeneous elements and preserve degrees. A submodule is graded if it is generated by homogeneous elements. It is easy to see that images and preimages of graded submodules are graded as well as cokernels. In other words: the graded modules with the homogeneous homomorphisms form an abelian category $\cM_0(R)$ (see \cite{Ro} for the homological algebra we will use).

In the following we will often say ``graded'' instead of ``$\ZZ^r$-graded''. If the restriction to $r=1$ is necessary, we will use ``$\ZZ$-graded''. Often ``homogeneous'' is a synonym for ``graded''.

Let $M$ be a graded $R$-module, generated by a family $(x_g)_{g\in G}$ of homogeneous elements. Set $d_g =\deg g$ for $g\in G$. Consider the free $R$-module
$$
F = \Dirsum_{g\in G}R(-d_g)
$$
and define the $R$-homomorphism $\phi: F \to M$ as the sum of the linear maps $R(-d_g)\to M$, $1\mapsto x_g$. Then $\phi$ is homogeneous, and so the kernel $\Ker \phi$ is again graded. Applying the same construction to $\Ker´\phi$ and continuing, one constructs a graded free resolution of $M$. It follows that a  projective object in the category $\cM_0(R)$ is a projective $R$-module. Moreover $\cM_0(R)$ has enough projectives. It has enough injectives as well, but their construction is more involved since an injective object in $\cM_0(R)$ need not be an injective module. See \cite[Section 3.6]{BH} for the construction of injective resolutions in $\cM_0(R)$.

Let $R$ be a $\ZZ^r$-graded ring and $M$ a $\ZZ^r$-graded  $R$-module.
For all $g=(g_1,\dots, g_r)\in\ZZ^r$, we set $\delta_i(g)=g_i$. Let $x\in M$. For all $i\in\lbrace{1,\dots, r\rbrace}$ and all $u\in\ZZ$ we  set 
$$
x^{(i)}_u=\sum_{\delta_i(g)=u}x_g.
$$
Here $x_g$ denotes the homogeneous component of $x$ of degree $g$. The element $x$ is \emph{$\delta_i$-homogeneous} if $x = x^{(i)}_u$ for some $u$. A submodule  $N$ of $M$ is \emph{$\delta_i$-homogeneous} if, for all $x\in N$ and all $u\in\ZZ$, $x^{(i)}_u\in N$.

\begin{Lemma}\label{delta-hom}
The $R$-submodule $N$ of $M$  is $\ZZ^r$-graded if and only if $N$ is $\delta_i$-homogeneous for all $i=1,\dots,r$.
\end{Lemma}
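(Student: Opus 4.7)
The proof proposal splits into the two implications of the biconditional.

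For the forward direction, assume $N$ is $\ZZ^r$-graded and let $x\in N$. Then by definition $x=\sum_g x_g$ with each $x_g\in N$ and only finitely many nonzero. Since
$$
x^{(i)}_u=\sum_{\delta_i(g)=u}x_g
$$
is a finite sum of elements of $N$, it lies in $N$. So $N$ is $\delta_i$-homogeneous for each $i$.

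For the converse, assume $N$ is $\delta_i$-homogeneous for every $i\in\{1,\dots,r\}$, and pick $x\in N$. It suffices to show that every homogeneous component $x_g$ of $x$ belongs to $N$, since then $N$ is generated by homogeneous elements and is therefore $\ZZ^r$-graded. The plan is to isolate the component $x_g$ at $g=(u_1,\dots,u_r)$ by successively applying the $\delta_i$-truncation operations. Starting from $x\in N$, the $\delta_1$-homogeneity of $N$ gives $y_1:=x^{(1)}_{u_1}\in N$, and by construction $y_1$ is the partial sum of the $x_h$ with $h_1=u_1$. Since $y_1\in N$, $\delta_2$-homogeneity yields $y_2:=(y_1)^{(2)}_{u_2}\in N$, which is the partial sum of the $x_h$ with $h_1=u_1$ and $h_2=u_2$. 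Iterating through $i=1,\dots,r$, one obtains $y_r\in N$, and $y_r$ is the sum of those $x_h$ with $\delta_i(h)=u_i$ for every $i$, i.e.\ $y_r=x_{(u_1,\dots,u_r)}$. Hence each homogeneous component of $x$ lies in $N$.

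The argument is essentially a straightforward finite induction on $r$, and there is no real obstacle: the only point to check carefully is that the intermediate elements $y_j$ remain in $N$ so that the next $\delta_{j+1}$-homogeneity hypothesis is applicable, which is precisely what $\delta_i$-homogeneity was defined to guarantee. One should also verify that the indexing sums in the definition of $x^{(i)}_u$ really are finite, but this is immediate from the fact that $x$ has only finitely many nonzero homogeneous components.
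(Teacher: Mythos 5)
Your proof is correct and follows essentially the same route as the paper: the sequential elements $y_1, y_2, \dots, y_r$ you construct are precisely the paper's $x_{(g_1,\dots,g_s)}$, and your iteration through $i=1,\dots,r$ is the paper's finite induction on $s$.
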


\begin{proof}
The ``only if'' part is trivial. For the ``if'' part, we introduce the following notation. Given an integer $s$ with $1\leq s\leq r$, and an $s$-tuple $(g_1,\dots, g_s)\in \ZZ^s$, for any $x\in M$ we set
$$x_{(g_1,\dots, g_s)}=\sum_{{\delta_i(g)=g_i}\atop{1\leq i\leq s}}x_g.$$
Suppose that $N$ is $\delta_i$-homogeneous for all $i=1,\dots, r$. Let $x\in N$. We prove that, for all integers $s$ with $1\leq s\leq r$, and any $s$-tuple $(g_1,\dots, g_s)\in \ZZ^s$, $x_{(g_1,\dots, g_s)}\in N$. We proceed by finite induction on $s$. The case $s=1$ is provided by the assumption of $\delta_1$-homogeneity. So assume that $1<s\leq r$ and that the claim is true for $s-1$. 
Let $(g_1,\dots, g_{s-1}, g_s)\in\ZZ^s$. By the induction hypothesis,  $x_{(g_1,\dots, g_{s-1})}\in N$. Hence 
$$x_{(g_1,\dots, g_{s-1}, g_s)}={x_{(g_1,\dots, g_{s-1})}^{(s)}}_{g_s}\in N$$
follows by $\delta_s$-homogeneity. 
\end{proof}

Given an ideal $I$ of $R$, we denote by $I^*$ the ideal of $R$ generated by the $\ZZ^r$-homogeneous components of the elements of $I$. Clearly, $I^*$ is the greatest $\ZZ^r$-graded ideal contained in $I$. It will be used in the following that $I^*$ can be constructed step by step where each step requires taking homogeneous elements with respect to a $\ZZ$-grading.

\begin{Lemma}\label{step_by_step}
Let $J$ be the ideal generated by the elements of $I$ that are $\delta_i$-homogeneous for all $i=1,\dots,r-1$. Then $I^*$ is the ideal generated by the $\delta_r$-homogeneous elements in $J$.
\end{Lemma}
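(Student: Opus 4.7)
The plan is to prove both inclusions. Let $K$ denote the ideal generated by the $\delta_r$-homogeneous elements of $J$; the goal is $K = I^*$. For $I^* \subseteq K$, I use the characterization of $I^*$ as the ideal generated by the $\ZZ^r$-homogeneous elements of $I$. Any such generator is $\delta_i$-homogeneous for every $i$: being $\delta_i$-homogeneous for $i \le r-1$ and lying in $I$, it is among the generators of $J$; being also $\delta_r$-homogeneous, it is among the generators of $K$. This inclusion is the easy one.

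For $K \subseteq I^*$, I first need the auxiliary fact that $J$ is itself $\delta_i$-homogeneous for each $i = 1, \dots, r-1$. This follows from a short direct check: writing $w = \sum_k a_k y_k \in J$ with each generator $y_k$ of $\delta_i$-degree $v_k$, one finds $w^{(i)}_u = \sum_k (a_k)^{(i)}_{u - v_k}\, y_k$, which still lies in $J$. With this in hand, pick a $\delta_r$-homogeneous $z \in J$ (a typical generator of $K$) and decompose it iteratively along $\delta_1, \dots, \delta_{r-1}$, producing the pieces $z_{(u_1, \dots, u_{r-1})}$ in the notation of the previous proof. Each piece lies in $J \subseteq I$ by repeated application of the $\delta_i$-homogeneity of $J$; it is $\delta_i$-homogeneous for $i < r$ by construction; and it is $\delta_r$-homogeneous because $z$ is, and this property is preserved under the $\delta_1, \dots, \delta_{r-1}$ decomposition. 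Thus every $z_{(u_1, \dots, u_{r-1})}$ is $\delta_i$-homogeneous for all $i$, hence a single $\ZZ^r$-homogeneous element of $I$, and therefore lies in $I^*$. Summing these pieces recovers $z$, so $z \in I^*$.

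The main (and essentially only) obstacle is the preliminary verification that $J$ is $\delta_i$-homogeneous for $i < r$; once this is in hand, the rest mirrors the iterative decomposition already used in the proof of Lemma \ref{delta-hom}.
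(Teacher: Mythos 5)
Your proof is correct and follows essentially the approach the paper intends: the paper only states that Lemma \ref{step_by_step} ``is proved by similar arguments as Lemma \ref{delta-hom}'', and your iterative decomposition along $\delta_1,\dots,\delta_{r-1}$ is exactly that argument, combined with the easy observation that $I^*\subseteq K$. The auxiliary fact you isolate (that the ideal generated by $\delta_i$-homogeneous elements is itself $\delta_i$-homogeneous) is the standard fact that an ideal generated by homogeneous elements of a $\ZZ$-grading is a graded ideal, and your direct verification of the coefficient formula $w^{(i)}_u=\sum_k (a_k)^{(i)}_{u-v_k}y_k$ is sound.
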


The lemma is proved by similar arguments as Lemma \ref{delta-hom}.

\begin{Lemma}\label{Laurent} Let $R$ be a $\ZZ^r$-graded ring. Then the following conditions are equivalent:
\begin{enumerate}
\item[(a)] every nonzero homogeneous element of $R$ is invertible;
\item[(b)] $R_0=k$ is a field and either $R=k$ or $ R=k[t_1,\dots, t_s, t_1^{-1}, \dots, t_s^{-1}]$ for some integer $s$ such that $1\leq s\leq r$, where $t_1,\dots, t_s$ are algebraically independent elements over  $k$ of nonzero degree.
\end{enumerate}
\begin{proof}
The implication (b) $\implies$ (a) is trivial.  We prove the implication (a) $\implies$ (b).  By assumption, $R_0=k$ is a field. If $R=R_0$, then $R=k$. Otherwise $R$ contains at least one homogeneous element of nonzero degree. Since all nonzero homogeneous elements of $R$ are invertible, their degrees form a subgroup of $\ZZ^r$, of rank $s\leq r$. Let $t_1,\dots, t_s$ be homogeneous elements of $R$ such that their degrees $d_i=\deg t_i$ form a basis of this subgroup. We now consider the ring $S=k[T_1,\dots, T_s, T_1^{-1},\dots, T_s^{-1}]$  of Laurent polynomials in $s$ variables, where $\deg(T_i)=d_i$ for all $i=1,\dots, s$.  We then define the $\ZZ^r$-graded ring homomorphism $\phi: S \to R$ that maps $k$ identically to $R_0$ and $T_i$ to $t_i$ for all $i=1,\dots, s$. We claim that $\phi$ is an isomorphism. For injectivity, consider an element $f$ of $\mathrm{Ker}\,\phi$, say
$$
f=\sum_{g\in\ZZ^r}a_gT_1^{\delta_1(g)}\cdots T_s^{\delta_s(g)},
$$
where, for all $g$, $a_g$ is an element of $k$. Then
$$
0=\phi(f)=\sum_{g\in\ZZ^r}a_gt_1^{\delta_1(g)}\cdots t_s^{\delta_s(g)},
$$
which implies that $a_gt_1^{\delta_1(g)}\cdots t_s^{\delta_s(g)}=0$ for all $g\in\ZZ^r$. Since all elements $t_i$ are invertible, it immediately follows that $a_g=0$, for all $g\in\ZZ^r$. Thus $f=0$, which proves that $\phi$ is injective. 

For surjectivity, consider a nonzero homogeneous element $a\in R$ of degree $d$. Let $c_1, \dots, c_s$ be integers such that $d=\sum_{i=1}^sc_id_i$. Then $\deg(a\prod_{i=1}^st_i^{-c_i})=0$, which means that $a=b\prod_{i=1}^st_i^{c_i}$ for some $b\in k$. Thus $a=\phi(b\prod_{i=1}^sT_i^{c_i})\in\,\mathrm{Im}\,\phi$. 
\end{proof}
\end{Lemma}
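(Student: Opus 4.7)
The plan is to tackle (a) $\implies$ (b), since the converse is immediate: every nonzero homogeneous element of a graded Laurent polynomial ring is a scalar multiple of a unit monomial. I would proceed in three steps: identify $R_0$ as a field, pin down the subgroup of $\ZZ^r$ consisting of degrees that actually occur, and compare $R$ with the Laurent polynomial ring built from this subgroup.

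First I would show $R_0=k$ is a field. Any nonzero $x\in R_0$ is homogeneous of degree $0$, so by (a) it has an inverse $y\in R$; writing $y=\sum_g y_g$ in homogeneous components, the equation $xy=1$ forces $xy_g=0$ for $g\neq 0$ and $xy_0=1$, so in fact $y_0\in R_0$ inverts $x$. If $R=R_0$, we are done with $R=k$. Otherwise set
$$G=\{g\in\ZZ^r : R_g\neq 0\}.$$
The same componentwise comparison shows that the inverse of any nonzero element of $R_g$ has a nonzero component in $R_{-g}$, so $-g\in G$. Closure of $G$ under addition follows from (a) because a product of two nonzero homogeneous elements cannot vanish: one of them is a unit. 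Hence $G$ is a subgroup of $\ZZ^r$, free of some rank $s$ with $1\leq s\leq r$.

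Next I would fix a $\ZZ$-basis $d_1,\dots,d_s$ of $G$, choose nonzero homogeneous elements $t_i\in R_{d_i}$, and introduce the Laurent polynomial ring $S=k[T_1,\dots,T_s,T_1^{-1},\dots,T_s^{-1}]$ graded by $\deg T_i=d_i$. The $k$-algebra homomorphism $\phi\colon S\to R$ sending $T_i\mapsto t_i$ is automatically graded. Surjectivity: any nonzero homogeneous $a\in R$ of degree $d=\sum c_i d_i$ satisfies $a\prod t_i^{-c_i}\in R_0=k$, so $a\in\phi(S)$; and $R$ is spanned by its homogeneous elements. Injectivity: an element of $\Ker\phi$ decomposes into homogeneous components, each of which still lies in $\Ker\phi$, so it suffices to check that a homogeneous element of $S$ maps to $0$ only if it is $0$. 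Since $d_1,\dots,d_s$ are $\ZZ$-linearly independent, each homogeneous component of $S$ is one-dimensional over $k$, spanned by a single monomial $T_1^{c_1}\cdots T_s^{c_s}$, whose image $t_1^{c_1}\cdots t_s^{c_s}$ is a unit.

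The only point that needs care is the passage from ``$d_1,\dots,d_s$ is a $\ZZ$-basis of $G$'' to ``distinct monomials in $T_1,\dots,T_s$ lie in distinct graded components of $S$''. This linear independence is what makes the homogeneous components of $S$ one-dimensional and forbids cancellation between monomials; once it is in hand, injectivity of $\phi$, and hence the algebraic independence of $t_1,\dots,t_s$ over $k$ asserted in (b), follows without further work. There is no deeper obstacle; the argument is essentially a careful bookkeeping exercise built on the structure theorem for subgroups of $\ZZ^r$.
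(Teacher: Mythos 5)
Your proof is correct and takes essentially the same route as the paper: identify the subgroup of occurring degrees, pick elements $t_i$ whose degrees form a $\ZZ$-basis, build the Laurent polynomial ring $S$, and check that the evident map $\phi\colon S\to R$ is bijective using that homogeneous elements of $R$ are units. You are somewhat more explicit than the paper about two preliminary steps the paper states without comment — that $R_0$ is a field (by a componentwise comparison in $xy=1$) and that the occurring degrees form a subgroup (closure under addition because a product of two nonzero homogeneous elements is a unit, hence nonzero) — and your injectivity argument is phrased in terms of one-dimensional graded components of $S$ rather than separating terms of the image by degree, but these are cosmetic differences of exposition, not of method.
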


In the sequel, the number $s$ occurring in the previous proof, i.e., the rank of the subgroup of $\ZZ^r$ generated by the degrees of the nonzero elements of $R$, will be denoted by $\sigma(R)$.

Since it is important in the following, we state a trivial fact as a lemma.
The proof is left to the reader.

\begin{Lemma}\label{iso}
Let $M$ be a graded module over the graded ring $R$. If R contains a unit of degree $i$, then $M\iso M(i)$ as graded modules.
\end{Lemma}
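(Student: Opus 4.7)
The plan is to exhibit the isomorphism explicitly as multiplication by the unit. The single preliminary point I would dispatch first is the observation that any homogeneous unit $u \in R_i$ has an inverse which is itself homogeneous, of degree $-i$. I would prove this by writing $u^{-1} = \sum_{g\in\ZZ^r} v_g$ in homogeneous components and multiplying by $u$: since $uv_g \in R_{g+i}$ and $1 \in R_0$, comparing homogeneous components of $1 = \sum_g u v_g$ forces $u v_{-i} = 1$ and $u v_g = 0$ for $g \neq -i$. By uniqueness of inverses in the ring $R$, this gives $u^{-1} = v_{-i} \in R_{-i}$.

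With this in hand the isomorphism itself is immediate. I would define $\psi \colon M \to M(i)$ by $\psi(x) = ux$. Under the shift convention $M(i)_g = M_{g+i}$ used in the excerpt (which is forced by the normalization $R(-d_g)\ni 1$ in degree $d_g$), an element $x \in M_g$ is sent to $ux \in M_{g+i} = M(i)_g$, so $\psi$ is degree-preserving, and it is manifestly $R$-linear. Multiplication by $u^{-1}$ defines a degree-preserving $R$-homomorphism $M(i) \to M$ by the same bookkeeping, and associativity of the module action shows that the two maps are mutually inverse. Hence $M \iso M(i)$ in $\cM_0(R)$.

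I do not foresee any real obstacle: the whole argument collapses to the two observations above, namely the homogeneity of $u^{-1}$ and the trivial degree shift. This is presumably why the authors rate the lemma as trivial and leave it to the reader.
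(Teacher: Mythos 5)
Your proof is correct, and since the paper explicitly leaves this lemma to the reader there is no competing argument to compare against; multiplication by the unit $u$, together with the preliminary observation that the inverse of a homogeneous unit of degree $i$ is itself homogeneous of degree $-i$, is precisely the intended argument. The degree bookkeeping checks out under the paper's convention $M(i)_g = M_{g+i}$, and the two maps are mutually inverse as you say.
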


Almost never a localization $R_\pp$ of a graded ring $R$ with respect to a prime ideal $\pp$ is graded. We stay in the category of graded rings and modules if we take the \emph{homogeneous localization} $R_{(\pp)} = T^{-1}R$ where $T$ is the set of homogeneous elements of $R$ outside $\pp$. For a graded module we set $M_{(\pp)} = T^{-1}M$.

\section{Dimension theory} 

In this section we discuss the relationship between a prime ideal $\pp$ in a graded ring and the ideal $\pp^*$ generated by the homogeneous elements in $\pp$, which turns out to be a prime ideal itself.

\begin{Lemma}\label{star}\leavevmode
\begin{enumerate}
\item[(a)] If $\pp$ is a prime ideal of $R$, then $\pp^*$ is a prime ideal. 
\item[(b)]
\begin{enumerate}
\item[(i)] If $\pp\in\Supp\, M$, then $\pp^*\in\Supp\, M$.
\item[(ii)]  Let $M$ be a $\ZZ^r$-graded $R$-module. Suppose that $\pp\in\mathrm{Ass}(M)$. Then $\pp$ is  $\ZZ^r$-graded. Furthermore, $\pp$ is the annihilator of some homogeneous element. 
\end{enumerate}
\end{enumerate}
\end{Lemma}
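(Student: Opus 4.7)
For part (a), I would first establish the graded prime criterion: a graded ideal $I \neq R$ is prime iff, for every pair of $\ZZ^r$-homogeneous elements $a, b$, the relation $ab \in I$ forces $a \in I$ or $b \in I$. The nontrivial direction uses a leading-term argument: fix a total order on $\ZZ^r$ compatible with addition (lex, for instance); given $a, b \notin I$, decompose $a = \sum a_g$ and $b = \sum b_h$, let $a_{g^*}$ and $b_{h^*}$ denote the top-degree homogeneous components not in $I$, and observe that the $(g^* + h^*)$-component of $ab$ equals $a_{g^*}b_{h^*}$ modulo $I$, so $ab \in I$ forces $a_{g^*} b_{h^*} \in I$, violating the assumption. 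Applying this to $\pp^*$: for homogeneous $a, b$ with $ab \in \pp^* \subseteq \pp$, primeness of $\pp$ gives $a \in \pp$ or $b \in \pp$, and any homogeneous element of $\pp$ lies in $\pp^*$ by definition.

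For part (b)(i), the key is that $\Ann(y)$ is graded whenever $y \in M$ is homogeneous: if $ry = 0$ and $r = \sum r_h$, the summands $r_h y$ have pairwise distinct degrees $h + \deg y$, forcing each to vanish. Given $\pp \in \Supp M$, pick $x \in M$ with $\Ann(x) \subseteq \pp$, decompose $x = \sum_{i=1}^n x_{g_i}$, and observe $\prod_i \Ann(x_{g_i}) \subseteq \Ann(x) \subseteq \pp$ (a product $r_1 \cdots r_n$ with $r_i \in \Ann(x_{g_i})$ annihilates every $x_{g_j}$ by commutativity). Primeness of $\pp$ then forces $\Ann(x_{g_j}) \subseteq \pp$ for some $j$; being graded, this ideal is contained in $\pp^*$, so $x_{g_j}$ witnesses $\pp^* \in \Supp M$.

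For part (b)(ii), the plan is to choose $x$ with $\Ann(x) = \pp$ minimizing the number $n$ of nonzero homogeneous components in $x = x_{g_1} + \cdots + x_{g_n}$ and to show $n = 1$. Assume $n \geq 2$. If some $\Ann(x_{g_j}) \not\subseteq \pp$, pick a homogeneous $r$ in that annihilator with $r \notin \pp$ (available because $\Ann(x_{g_j})$ is graded); then $rx = \sum_{i \neq j} r x_{g_i}$ is nonzero (else $r \in \pp$), has at most $n-1$ homogeneous components, and satisfies $\Ann(rx) = \pp$ by primeness, contradicting minimality. Otherwise every $\Ann(x_{g_i}) \subseteq \pp$, hence $\subseteq \pp^*$; conversely a homogeneous $r \in \pp^*$ kills $x$, and the distinct-degree argument forces $r x_{g_i} = 0$ for all $i$, so $\pp^* = \Ann(x_{g_i})$ for each $i$. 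The case $\pp = \pp^*$ already yields $\pp = \Ann(x_{g_1})$, violating minimality, so assume $\pp \supsetneq \pp^*$ and take $r \in \pp \setminus \pp^*$. The leading-term argument applied to $rx = 0$ places the top-degree homogeneous component $r_{h^*}$ of $r$ in $\Ann(x_{g_n}) = \pp^*$, so $r - r_{h^*} \in \pp \setminus \pp^*$ has strictly fewer homogeneous components; iterating shows every homogeneous component of $r$ lies in $\pp^*$, forcing $r \in \pp^*$, a contradiction. The main obstacle is this final descent in the second subcase: verifying that the leading-term subtraction remains in $\pp \setminus \pp^*$ throughout and eventually exhausts $r$.
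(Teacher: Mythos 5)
Your argument is correct throughout, and parts (a) and (b)(i) are essentially the paper's proofs in light repackaging: (a) is the same leading-term argument with respect to a monomial order on $\ZZ^r$ (you take maximal degrees where the paper takes minimal ones, and you first isolate the standard ``homogeneous prime criterion'' before applying it to $\pp^*$); (b)(i) is the paper's contradiction argument run in the contrapositive, with the extra observation $\prod_i \Ann(x_{g_i}) \subseteq \Ann(x) \subseteq \pp$ to invoke primeness directly.

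Part (b)(ii) is where you take a genuinely different route. The paper fixes one $x$ with $\Ann(x)=\pp$, fixes an arbitrary $a\in\pp$, and runs a double induction: first on $k$ to prove $a_{g_0}^{k+1}x_{h_k}=0$ (hence $a_{g_0}\in\pp$ and $a_{g_0}$ kills every component of $x$), then on the number of components of $a$ to conclude that \emph{every} homogeneous component of $a$ lies in $\pp$ and that $a$ kills every $x_{h_k}$; this gives $\pp=\bigcap_k\Ann x_{h_k}$, and primeness selects one index $k$ with $\pp=\Ann x_{h_k}$. You instead minimize the number $n$ of nonzero homogeneous components among all $x$ with $\Ann(x)=\pp$ and argue $n=1$ by cases: if some $\Ann(x_{g_j})\not\subseteq\pp$, multiplying by a homogeneous $r\in\Ann(x_{g_j})\setminus\pp$ produces a shorter element with the same annihilator (using $\Ann(rx)=\pp$ by primeness); otherwise all $\Ann(x_{g_i})=\pp^*$, and your descent on the top component of an alleged $r\in\pp\setminus\pp^*$ (each top component lands in $\Ann(x_{g_n})=\pp^*$, so it can be stripped off while keeping $r x=0$) forces $\pp=\pp^*$, after which $\pp=\Ann(x_{g_1})$ contradicts $n\ge 2$. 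The ``obstacle'' you flag in the descent is in fact fine: once a homogeneous component of $r$ is shown to be in $\pp^*=\Ann(x_{g_i})$ for all $i$, it annihilates $x$, so the shortened element still annihilates $x$, remains in $\pp$, and (being nonzero and non-homogeneous if it is still outside $\pp^*$) lets the strictly decreasing component count terminate in a contradiction. The paper's proof is more compact and uniform, handling the gradedness of $\pp$ and the choice of $x_{h_k}$ in one sweep; your minimality argument is a bit longer but each step is a single clean reduction, and Case 1 in particular is a nice self-contained move that the paper does not need.
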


\begin{proof}
The crucial point is that $\ZZ^r$ is an ordered group with respect to a suitable order, for example the lexicographic order. The relation $<$ for elements of $\ZZ^r$ refers to it, as well as the attributes ``minimal'' and ``maximal''.

(a) Let $a,b\in R$ be such that $ab\in \pp^*$.  Suppose for a contradiction that $a\notin \pp^*$ and $b\notin \pp^*$. Then, for some  $u, v\in\ZZ^r$, $a_u\notin \pp^{\ast}$ and $b_v\notin \pp^*$. Let $u, v$ be minimal. The homogeneity of $\pp^*$ implies that $(ab)_{u+v}\in \pp^*$. But $(ab)_{u+v}=\sum_{i+j=u+v}a_ib_j$ and all summands, possibly up to $a_ub_v$, belong to $\pp^*$. This implies that $a_ub_v\in \pp^*\subset \pp$. Since $\pp$ is prime, it follows that $a_u\in \pp^{\ast}$ or $b_v\in \pp^*$, a contradiction.

(b) Suppose for a contradiction that $\pp^*\notin\Supp\, M$. Then $M_{\pp^*}=0$. Let $x\in M$ be a homogeneous element. Then there is an element $a\in R\setminus\pp^*$ such that $ax=0$. Since $x$ is homogeneous, it follows that $a_gx=0$ for all $g\in\ZZ^r$. On the other hand,  $a_g\notin \pp^*$ for some $g\in\ZZ^r$. Since $a_g$ is homogeneous, we even have that $a_g\notin\pp$. Hence $x/1=0$ in $M_{\pp}$. But this is true for any homogeneous $x\in M$. Thus $M_{\pp}=0$, against our assumption. 

We now prove (ii). Let $x$ be an element of $M$ such that $\pp=\Ann x$ and pick $a\in \pp$. We expand $a$ and $x$ into homogeneous components:
$$
a = a_{g_0}+\dots+a_{g_s} , \qquad x = x_{h_0} +\dots + x_{h_t},
$$
with $g_0 < \dots < g_s$ and $h_0 < \dots < h_t$. We claim
$$
a_{g_0}^{k+1} x_{h_k} = 0,\qquad k=0,\dots,t.
$$
Then $a_{g_0}^{t+1}$ annihilates $x$, and so $a_{g_0}^{t+1}\in \pp$. Since $\pp$ is a prime ideal, $a_{g_0}\in\pp$. It follows that $a_{g_0}$ annihilates all homogeneous components of $x$. Moreover, 
$(a_{g_1}+\dots+a_{g_s})x = 0$. By induction on $s$ it follows that all homogeneous components of $a$ lie in $\pp$, and, moreover, $a$ annihilates all homogeneous components of $x$. Hence, $\pp\subset\bigcap_{k=0}^t\Ann x_{h_k}$. On the other hand, the opposite inclusion is obvious. The primality of $\pp$ thus implies that $\pp=\Ann x_{h_k}$ for some $k$. 

In order to prove the claim above, we split the set of pairs $(i,j)$, $i=0,\dots,s$, $j=0,\dots,t$, into subsets $G_w =\{(i,j): g_i + h_j = w\}$, $w\in \ZZ^r$. The smallest $w$ for which $G_w\neq \emptyset$ is $w = g_0+ h_0$, and $G_w = \{(g_0, h_0) \}$. So $ax = 0$ implies $a_{g_0} x_{h_0} = 0$. This starts an induction on $k$. Now take $w = g_0 + h_k$. Then all pairs  $(i,j) \in G_w$, $j\neq k$ satisfy $j < k$. We have
$$
\sum_{(i,j)\in G_w} a_{g_i}x_{h_j} = 0.
$$
Multiplying by $a_{g_0}^k$ and using the induction hypothesis, we see that $a_{g_0}^{k+1} x_{h_k} = 0$.
\end{proof} 

Lemma \ref{star} is the starting point for the construction of chains of graded prime ideals in a graded ring and the comparison of $\pp$ and $\pp^*$.

\begin{Theorem}\label{chain} 
Let $R$ be a Noetherian $\ZZ^r$-graded ring, $M$ a finite $\ZZ^r$-graded $R$-module and $\pp\in\,\Supp\, M$. 
\begin{enumerate}
\item[(a)] If $\pp$ is $\ZZ^r$-graded, then there exists a chain  $\pp_0\subset\cdots\subset\pp_d=\pp$, $d=\dim M_{\pp}$,  of $\ZZ^r$-graded prime ideals $\pp_i\in\,\Supp\,M$. 
\item[(b)] If $\pp$ is not $\ZZ^r$-graded, then $\dim M_{\pp^*}<\dim M_{\pp}\leq \dim M_{\pp^*}+\sigma(R).$
\end{enumerate}
\end{Theorem}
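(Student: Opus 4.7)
The plan is to establish (b) first and then derive (a) from it by reduction to a graded Noetherian domain and induction.

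For (b), I replace $R$ with $R/\Ann M$ (still $\ZZ^r$-graded, since $\Ann M$ is graded), so that $M=R$ and the claim becomes $\dim R_{\pp^*}<\dim R_\pp\leq\dim R_{\pp^*}+\sigma(R)$. The strict inequality is immediate: by Lemma \ref{star}(b)(i) we have $\pp^*\in\Supp M$, and $\pp^*\subsetneq\pp$ since $\pp$ is not graded, so any maximal chain of primes in $\Supp M$ ending at $\pp^*$ can be prolonged by $\pp$. For the upper bound I pass to the homogeneous localization $R_{(\pp)}$ and form the quotient $L=R_{(\pp)}/\pp^*R_{(\pp)}$. Every nonzero homogeneous element of $L$ is invertible: a homogeneous $x\in R$ outside $\pp^*$ automatically lies outside $\pp$ (by definition of $\pp^*$) and is therefore inverted in $R_{(\pp)}$. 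By Lemma \ref{Laurent}, $L$ is a Laurent polynomial ring in $s\leq\sigma(R)$ variables over a field, of Krull dimension $s$. The prime $\pp R_{(\pp)}/\pp^*R_{(\pp)}$ then has height at most $s$ in $L$, so by Krull's principal ideal theorem applied in $L$ it is minimal over some $s'\leq s$ elements. Lifting these, $\pp R_{(\pp)}$ is minimal over $\pp^*R_{(\pp)}+(y_1,\dots,y_{s'})$, and the generalized Krull height theorem yields $\hht\pp R_{(\pp)}\leq\hht\pp^*R_{(\pp)}+s'$, which passes to $R$ via the standard identification of heights under homogeneous localization.

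For (a), I reduce to the graded domain case. The ideal $\Ann M$ is graded, and its minimal primes are graded (if $\nn$ is minimal, $\nn^*$ is a graded prime containing $\Ann M$ inside $\nn$, forcing $\nn^*=\nn$). Pick a graded minimal prime $\nn$ of $\Ann M$ with $\nn\subseteq\pp$ achieving $\hht(\pp/\nn)=\dim M_\pp$. A graded chain in $R/\nn$ from $0$ to $\pp/\nn$ then lifts to a graded chain in $R$ from $\nn$ to $\pp$ lying entirely in $\Supp M$. It thus suffices to prove: in a graded Noetherian domain $A$, every graded prime $\pp$ of height $d$ admits a chain of graded primes of length $d$ ending at $\pp$.

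I prove this reduced statement by induction on $d$; the case $d=0$ is trivial. For $d\geq 1$, pick a nonzero homogeneous $a\in\pp$ (which exists since $\pp$ is graded and nonzero in the domain $A$) and let $\qq$ be a minimal prime of $(a)$ contained in $\pp$. By Krull, $\hht\qq=1$. Since $a$ is homogeneous, $\qq^*$ is a graded prime containing $(a)$ inside $\qq$; minimality of $\qq$ over $(a)$ forces $\qq^*=\qq$, so $\qq$ is graded. A dimension-theoretic argument (using part (b) applied to $A/\qq$, together with the catenary behavior of the graded local ring at $\pp$) then shows $\hht(\pp/\qq)=d-1$. The induction hypothesis applied to the graded domain $A/\qq$ with graded prime $\pp/\qq$ yields a chain of graded primes of length $d-1$ ending at $\pp/\qq$; lifting and prepending $0$ gives the desired chain of length $d$ in $A$ ending at $\pp$. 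The main obstacle is the upper bound in (b): the Laurent polynomial structure of $L$ falls out of Lemma \ref{Laurent}, but extracting the sharp increment $\sigma(R)$ between $\pp^*$ and $\pp$ requires the generalized Krull height theorem rather than a naive chain-length count, which only yields the substantially weaker bound $\dim R_\pp\leq(\sigma(R)+1)(\dim R_{\pp^*}+1)-1$.
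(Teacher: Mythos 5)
Your proposal runs in the opposite logical order from the paper (which proves (a) by induction on $r$ and then deduces the upper bound in (b) from the chain construction), and the difference is not cosmetic: both halves of your argument invoke catenarity-type facts that do not hold for general Noetherian rings, a pitfall the paper explicitly flags (``If $R$ is catenary, the inequality in (b) follows. But we will prove it independently of catenarity.'').

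In your proof of (b), you correctly deduce that $\pp R_{(\pp)}$ is minimal over $\pp^*R_{(\pp)}+(y_1,\dots,y_{s'})$ with $s'\le\sigma(R)$, and you then assert that ``the generalized Krull height theorem yields $\hht\pp R_{(\pp)}\le\hht\pp^*R_{(\pp)}+s'$.'' No such theorem exists at this level of generality. Krull's principal ideal theorem, applied in $R_{(\pp)}/\pp^*R_{(\pp)}$, gives $\hht(\pp/\pp^*)\le s'$, which you already knew from the Laurent-ring structure of $L$; it does \emph{not} give the additive bound on $\hht\pp$ in terms of $\hht\pp^*$. The additive version is precisely the inequality $\hht\qq\le\hht\pp+k$ for $\qq$ minimal over $\pp+(x_1,\dots,x_k)$, and this is false in non-catenary Noetherian local rings (Nagata's classical example has a height-one prime $\pp$ with $\mm$ minimal over $\pp+(x)$ and $\hht\mm=3$). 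So the crucial inequality of (b) is not established.

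In your proof of (a), the key reduction step asserts that for a graded prime $\qq\subset\pp$ with $\hht\qq=1$ and $\hht\pp=d$ one has $\hht(\pp/\qq)=d-1$, which you justify by ``catenary behavior of the graded local ring at $\pp$.'' Graded Noetherian rings are not catenary in general, and you have not proved any such property here; in fact, without catenarity one only gets $\hht(\pp/\qq)\le d-1$, and the inductive step collapses. The paper avoids both issues by a genuinely different mechanism: it proves (a) by induction on the number $r$ of gradings, passing through the one-step refinements $\pp=\pp^{(0)}\supseteq\pp^{(1)}\supseteq\cdots\supseteq\pp^{(r)}=\pp^*$ of Lemma \ref{step_by_step}, where each step is a $\ZZ$-grading and the $r=1$ case is handled by directly replacing the top non-graded links of a maximal chain with graded ones. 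This yields $\dim M_{\pp^{(i-1)}}\le\dim M_{\pp^{(i)}}+1$ and hence the bound $\dim M_\pp\le\dim M_{\pp^*}+\tau(\pp)\le\dim M_{\pp^*}+\sigma(R)$ without any catenarity hypothesis. Unless you supply a proof that the relevant graded local rings are catenary, your argument only establishes the statement for catenary $R$.
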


\begin{proof} We first assume that $\pp$ is not $\ZZ^r$-graded.  Then we immediately have $\dim M_{\pp^*}<\dim M_{\pp}$.  Next we show that $\hht \pp/\pp^*\leq \sigma(R)$. We first move to the quotient ring $S = R/\pp^*$, and consider the ideal $\pp/\pp^*.$ It does not contain any nonzero homogeneous element. If we invert  these elements, we obtain the homogeneous localization $S_{(0)}$. Now, $\pp S_{(0)}$ is a nonzero prime ideal, and, by Lemma \ref{Laurent}, the ring $S_{(0)}$ is of the form $k[T_1,\dots, T_s, T_1^{-1}, \dots, T_s^{-1}]$ for some positive integer $s \le \sigma(R)$. Since the height of ideals in the Laurent polynomial ring is bounded by $s$, the required inequality follows. If $R$ is catenary, the inequality in (b) follows. But we will prove it independently of catenarity.
	
The proof of (a) is by induction on  $r$, starting  with the trivial case $r=0$. Set $s=r-1$. Since $\pp$ is $\ZZ^s$-graded, there exists a chain  $\pp_0\subset\cdots\subset\pp_d=\pp$, $d=\dim M_{\pp}$ of $\ZZ^s$-graded prime ideals. By Lemma \ref{star}, $\pp_0$ is $\ZZ^r$-graded, and this covers the case $d=1$. Assume that $d>1$.  if $\pp_1$ is $\ZZ^r$-graded or we can replace it by a $\ZZ^r$-graded prime ideal, we are done by induction after passing to $M/\pp_1M$. Assume that $\pp_1$ is not $\ZZ^r$-graded, and let $\qq$ be the ideal generated by the  $\delta_r$-homogeneous elements in $\pp_1$. Then, by what has been shown above and by Lemma \ref{step_by_step}, $\qq$ is $\ZZ^r$-graded and lies properly between $\pp_0$ and $\pp_2$. 

It remains to show that $\dim M_{\pp}\leq \dim M_{\pp^*}+\sigma(R)$. Only if $\pp$ is not graded there is something to show.  Let $r=1$ first. There exists a chain  $\pp_0\subset\cdots\subset\pp_d=\pp$, $d=\dim M_{\pp}$,  of  prime ideals $\pp_i\in\Supp M$. Since $\pp_0$ is graded, we can apply induction on $d$ and assume that $\pp_0,\dots,\pp_{d-2}$ are $\ZZ^r$-graded. Evidently $\pp_{d-2} \subset\pp^*\subset \pp_d$, and both inclusions are strict  since $\hht \pp/\pp^* =1$.
	
For general $r$, we use Lemma \ref{step_by_step}. Set $\pp^{(0)} = \pp$ and let $\pp^{(i)}$  be the ideal generated by the elements in  $\pp^{(i-1)}$ that are $\delta_i$-homogeneous, $i = 1,\dots,r$. Then $\dim M_{\pp^{(i-1)}}\le \dim M_{\pp^{(i)}} +1$, and we are done by accumulating the inequalities.
\end{proof}
 
In view of Theorem \ref{chain}(b) we introduce the invariant 
$$
\tau(\pp) = \dim R_\pp - \dim R_{\pp^*}.
$$
Clearly, with the notation of the proof of Theorem \ref{chain}, $\tau(\pp) = \vert\{i: \pp^{(i)} \neq \pp^{(i-1)}\}\vert$. One should notice that the number is independent of base changes in $\ZZ^r$.

\section{Homological invariants}\label{HomProps}

We need an operation that keeps the structure of an $R$-module, but changes the degrees: the degree $g$ homogeneous component of $M(s)$ is $M_{g+s}$. In this context the integer $r$-tuple $s$ is called a \emph{shift}. The $R$-homomorphisms $f:M\to M'$ where $M$ and $M'$ are $\ZZ^r$-graded $R$-modules, satisfying $f(M_g)\subset (M')_{g+s}$ for all $g$ form an $R_0$-module $\Hom_s(M,M')\subset \Hom_R(M,M')$. One sets
$$
\*\Hom_R(M,M') = \Dirsum_s \Hom_s(M,M') \subset \Hom_R(M,M').
$$

Note that $\*\Hom_R(M,M')\neq \Hom_R(M,M')$ in general, but equality holds if $M$ is finitely generated and $R$ is Noetherian since $\*\Hom$ is left exact and equality certainly holds for free modules of finite rank. The derived functors  of $\*\Hom$ are called $\*\Ext$. It is easy to see that $\*\Hom(M(u), N(v)) = \*\Hom(M,N)(-u+v)$ for all $u,v\in \ZZ^r$, and the same rule applies to $\*\Ext$.

For a Noetherian ring $R$ and a finitely generated $\ZZ^r$-graded $R$-module $M$, one has $\*\Ext_R^i(M,N) = \Ext_R^i(M,N)$ for all graded $R$-modules $N$ and all $i$ so that $\*$ only emphasizes the graded structure. 

\emph{Rees' lemma} is an important tool by which one can control the effect of going modulo a regular sequence:

\begin{Lemma} \label{Rees}
\index{Rees' lemma} Let $R$ be a $\ZZ^r$-graded ring, and $Q$ and
$N$  $\ZZ^r$-graded $R$-modules. If $x\in R$ is a homogeneous element of
degree $g$ which is a non-zerodivisor of $R$ and $Q$ and annihilates
$N$, then $\*\Ext_R^{i+1}(N,Q)\iso\*\Ext_{R/(x)}^i(N,Q/xQ)(g)$ for all $i$.
\end{Lemma}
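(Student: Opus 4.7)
The plan is induction on $i$, with dimension shifting via a graded $R/(x)$-free presentation of $N$.

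A preliminary computation for the free case: since $x$ is a non-zerodivisor on $R$, the sequence
$$
0 \to R(-g) \xrightarrow{\cdot x} R \to R/(x) \to 0
$$
is a graded $R$-free resolution of $R/(x)$. Applying $\*\Hom_R(-,Q)$ gives the two-term complex $Q \xrightarrow{\cdot x} Q(g)$, whose kernel vanishes (as $x$ is $Q$-regular) and whose cokernel is $(Q/xQ)(g)$. Hence $\*\Ext_R^j(R/(x),Q)=0$ for $j\ne 1$ and $\*\Ext_R^1(R/(x),Q)=(Q/xQ)(g)$. By additivity, for any graded $R/(x)$-free module $F$,
$$
\*\Ext_R^j(F,Q)=0 \quad (j\ne 1), \qquad \*\Ext_R^1(F,Q)=\*\Hom_{R/(x)}(F,Q/xQ)(g).
$$

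Base case $i=0$: apply $\*\Hom_R(N,-)$ to
$$
0 \to Q(-g) \xrightarrow{\cdot x} Q \to Q/xQ \to 0,
$$
which is exact because $x$ is $Q$-regular. Both $\*\Hom_R(N,Q(-g))$ and $\*\Hom_R(N,Q)$ vanish, since any graded $\phi$ into $Q$ satisfies $x\phi=0$ while $Q$ has no $x$-torsion. Moreover, under the identification $\*\Ext_R^1(N,Q(-g))=\*\Ext_R^1(N,Q)(-g)$, the map induced by $\cdot x$ becomes the internal $R$-multiplication by $x$ on $\*\Ext_R^1(N,Q)$, which is zero because $xN=0$ and $\*\Ext$ is bifunctorial. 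The long exact sequence collapses to $\*\Hom_R(N,Q/xQ)\iso\*\Ext_R^1(N,Q)(-g)$. Shifting by $g$ and using $\*\Hom_R(N,Q/xQ)=\*\Hom_{R/(x)}(N,Q/xQ)$ yields the case $i=0$.

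Inductive step: assume the claim below index $i\ge 1$ and choose a graded presentation $0\to N'\to F\to N\to 0$ with $F$ graded $R/(x)$-free. Since $xN'=xF=0$, the inductive hypothesis applies to $N'$ and the base case to $F$. Using the vanishing $\*\Ext_R^j(F,Q)=0$ for $j\ne 1$, the long exact sequence of $\*\Ext_R(-,Q)$ gives $\*\Ext_R^{i+1}(N,Q)\iso\*\Ext_R^i(N',Q)$ for $i\ge 2$, and for $i=1$ identifies $\*\Ext_R^2(N,Q)$ with the cokernel of $\*\Ext_R^1(F,Q)\to\*\Ext_R^1(N',Q)$. The parallel long exact sequence of $\*\Ext_{R/(x)}(-,Q/xQ)$ exhibits the same pattern with $R$ replaced by $R/(x)$ and indices lowered by one, using $\*\Ext_{R/(x)}^j(F,Q/xQ)=0$ for $j\ge 1$. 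Since both sequences descend from the same map $N'\to F$, the isomorphisms supplied by the inductive hypothesis (or by the base case when $i=1$) match the two sides up to the shift by $g$, yielding the desired formula for $(N,i)$.

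The main obstacle is the bookkeeping of the shift by $g$ that accumulates at each step and the check that the identifications between the two long exact sequences are compatible with the connecting maps. Once that compatibility is verified, the induction runs without issue.
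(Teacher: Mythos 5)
Your argument is correct and is essentially the paper's own proof: the paper defers to \cite[3.1.16]{BH}, where the nongraded version is established by comparing the two sides as erasable $\delta$-functors in $N$ that agree in degree zero, and your explicit dimension-shifting induction over graded $R/(x)$-free presentations is precisely the unwinding of that universal $\delta$-functor argument. One small remark: at the inductive step with $F$ possibly an infinite direct sum, the assertion ``by additivity'' is not quite the right justification, since $\*\Hom$ turns direct sums in the first variable into degree-wise products; the identities $\*\Ext_R^j(F,Q)=0$ for $j\neq 1$ and $\*\Ext_R^1(F,Q)\iso\*\Hom_{R/(x)}(F,Q/xQ)(g)$ still hold because the two-term resolution of $R/(x)$ sums up to a two-term resolution of $F$ and degree-wise products are exact, but it is worth saying this explicitly if $N$ is not assumed finitely generated.
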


The nongraded version is proved in \cite[3.1.16]{BH}, and the
graded version is proved in the same way, using the axiomatic
characterization of derived functors.

Let $R$ be a Noetherian ring, and $\pp$ a prime ideal. By $k(\pp)$ we denote the residue class field of the localization $R_\pp$, $k(\pp) = R_\pp/\pp R_\pp$.  One calls  $\mu_i(\pp,M) = \dim_k \*\Ext_R^i(k(\pp), M_\pp)$ the $i$-th \emph{Bass number} of $M$ at $\pp$. 
By definition  the depth of a finite nonzero module $M$ over a local Noetherian ring $R$ with maximal ideal $\mm$ is the maximal length of an $M$-regular sequence in $\mm$. Homologically, it is characterized by
$$
\depth M = \min\bigl\{i: \*\Ext_R^i(k, M)\neq 0   \bigr\}, \qquad k= R/\mm.
$$
The  \emph{type} of $M$ is $r(M) = \dim_k \*\Ext_R^t(k, M)$, $t = \depth M$.  The Bass numbers at $\mm$ determine the \emph{injective dimension $\id $}, the minimal length of a minimal injective resolution of $M$,
$$
\id M = \sup \bigl\{i: \*\Ext_R^i(k, M)\neq 0   \bigr\}.
$$
In the following we want to relate these invariants for the localizations with respect to $\pp$ and $\pp^*$. The crucial case is that of $\ZZ$-gradings, for which we copy the proof \cite[1.5.9]{BH}.

\begin{Theorem}\label{invariants} 
Let $R$ be a Noetherian $\ZZ$-graded ring, $M$ a finite $\ZZ$-graded $R$-module and $\pp\in\,\Supp\,M$ a nongraded prime ideal. Then
\begin{enumerate}
\item[(a)]
$\depth M_{\pp}= \depth M_{\pp^*}+1$,
\item[(b)] $ r(M_{\pp})= r(M_{\pp^*})$,
\item[(c)]  $\mu_o(\pp, M) = 0$ and $\mu_{i+1}(\pp, M)=\mu_i(\pp^*, M)$ for all $i \ge 0$,
\item[(d)] $\id_{R_\pp} M_\pp = \id_{R_{\pp^*}} M_{\pp^*} + 1$,
\item[(e)] $\pd_{R_\pp} M_\pp = \pd_{R_{\pp^*}}M_{\pp^*}$.
\end{enumerate}
\end{Theorem}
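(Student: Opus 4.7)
The plan is to reduce the problem to a clean structural decomposition under which all five assertions follow from standard flat base change formulas for the invariants in question.

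First, I would exploit Lemma \ref{Laurent} to obtain a concrete description of the homogeneous localization $R_{(\pp^*)}$. The graded quotient $R_{(\pp^*)}/\pp^* R_{(\pp^*)}$ is a graded ring whose nonzero homogeneous elements are all invertible, and since $\pp \neq \pp^*$ forces a nontrivial grading, Lemma \ref{Laurent} gives $R_{(\pp^*)}/\pp^* R_{(\pp^*)} \iso k[t, t^{-1}]$ for some field $k$. Lift a homogeneous generator to an element $T \in R_{(\pp^*)}$; since $T$ is nonzero modulo the *-maximal ideal $\pp^* R_{(\pp^*)}$ and is homogeneous, $T$ is a unit of $R_{(\pp^*)}$. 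Multiplication by $T$ gives an isomorphism between consecutive graded components of both $R_{(\pp^*)}$ and $M_{(\pp^*)}$, yielding
$$
R_{(\pp^*)} \iso A[T, T^{-1}], \qquad M_{(\pp^*)} \iso M_0 \otimes_A A[T, T^{-1}],
$$
where $A = (R_{(\pp^*)})_0$ is local with maximal ideal $\mm_A = A \cap \pp^*$ and $M_0 = (M_{(\pp^*)})_0$ is a finite $A$-module. Under this identification, $\pp^*$ corresponds to $\mm_A A[T, T^{-1}]$ and $\pp$ corresponds to $\mm_A A[T, T^{-1}] + (f(T))$ for some $f$ lifting an irreducible polynomial $\bar f \in k[T, T^{-1}]$.

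Next, I would analyze the two closed fibers of the flat local maps $A \to R_{\pp^*}$ and $A \to R_\pp$, both obtained by further localization of the free extension $A \to A[T, T^{-1}]$. The fiber at $\pp^*$ is the field $k(T) = (k[T, T^{-1}])_{(0)}$, which has depth $0$, injective dimension $0$, and Bass numbers $\mu_0 = 1$, $\mu_i = 0$ for $i \geq 1$. The fiber at $\pp$ is the discrete valuation ring $k[T, T^{-1}]_{(\bar f)}$, which has depth $1$, injective dimension $1$, and Bass numbers $\mu_0 = 0$, $\mu_1 = 1$, $\mu_i = 0$ for $i \geq 2$.

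Finally, I would invoke the standard behavior of these invariants under a flat local extension $A \to B$ with closed fiber $F$ (as in \cite{BH}, Section 1.2): $\depth_B(M_0 \otimes_A B) = \depth_A M_0 + \depth F$, $\id_B(M_0 \otimes_A B) = \id_A M_0 + \id F$, $\pd_B(M_0 \otimes_A B) = \pd_A M_0$, and
$$
\mu_i(\mm_B, M_0 \otimes_A B) = \sum_{j} \mu_j(\mm_A, M_0)\,\mu_{i-j}(\mm_F, F).
$$
Substituting the two fibers computed above immediately gives $\depth M_\pp = \depth M_{\pp^*} + 1$ (a), $\id M_\pp = \id M_{\pp^*} + 1$ (d), $\pd M_\pp = \pd_A M_0 = \pd M_{\pp^*}$ (e), and $\mu_{i+1}(\pp, M) = \mu_i(\pp^*, M)$ with $\mu_0(\pp, M) = 0$ (c); part (b) then follows from $r(M_\pp) = \mu_{\depth M_\pp}(\pp, M) = \mu_{\depth M_{\pp^*}}(\pp^*, M) = r(M_{\pp^*})$. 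The main technical point is the structural identification $R_{(\pp^*)} \iso A[T, T^{-1}]$; once that is in place, the rest is bookkeeping with standard local algebra.
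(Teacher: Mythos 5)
The argument breaks down at the structural claim $R_{(\pp^*)} \iso A[T,T^{-1}]$ with $A = (R_{(\pp^*)})_0$. What you actually get from Lemma \ref{Laurent} and the lift is a homogeneous unit $T$ of $R_{(\pp^*)}$ whose degree $d=\deg T$ generates the subgroup of $\ZZ$ spanned by the degrees in the \emph{quotient} $R_{(\pp^*)}/\pp^* R_{(\pp^*)}$; but $R_{(\pp^*)}$ itself can have nonzero homogeneous components in degrees not divisible by $d$, since these are allowed to sit entirely inside $\pp^*R_{(\pp^*)}$. Multiplication by $T$ only identifies $(R_{(\pp^*)})_e$ with $(R_{(\pp^*)})_{e+d}$, so the ring does not collapse to $A[T,T^{-1}]$. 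A concrete counterexample: take $R=k[x,y,u]$ with $\deg x=2$, $\deg y=3$, $\deg u=6$, and $\pp=(x,y,u-1)$, so that $\pp^*=(x,y)$. Every homogeneous element of $R$ outside $(x,y)$ has degree in $6\ZZ$, hence so does every homogeneous unit of $R_{(\pp^*)}$, while $R_{(\pp^*)}$ still contains $x$ and $y$ of degrees $2$ and $3$. So no choice of $T$ makes $R_{(\pp^*)}$ a Laurent extension of a local degree-zero subring, and the flat base change $A \to R_{\pp^*}$, $A\to R_\pp$ with the claimed closed fibers is not available. The downstream identification $M_{(\pp^*)}\iso M_0\otimes_A A[T,T^{-1}]$ fails for the same reason.

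The paper avoids this problem entirely. After replacing $R$ by $R_{(\pp)}$ (which equals your $R_{(\pp^*)}$), it uses that $R/\pp^* \iso k[t,t^{-1}]$ is a PID to write $\pp=\pp^*+(a)$, and then runs the long exact sequence of $\*\Ext_R(-,M)$ attached to $0\to R/\pp^*\xrightarrow{\,a\,}R/\pp^*\to R/\pp\to 0$. The key observation is that each $\*\Ext^i_R(R/\pp^*,M)$ is a graded $k[t,t^{-1}]$-module, hence free, so multiplication by $a$ is injective and $\*\Ext^{i+1}_R(R/\pp,M)\iso\*\Ext^i_R(R/\pp^*,M)/a\*\Ext^i_R(R/\pp^*,M)$; comparing ranks then gives (a)--(d) at once, and (e) is handled separately via ideals of minors of a graded free resolution. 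If you want to keep a flat base change flavor, you would need to compare $R_{\pp^*}$ and $R_\pp$ directly as localizations of $R_{(\pp)}$ without inventing an intermediate local subring $A$; but as written, the decomposition is the missing (and false) step.
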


\begin{proof}
For (a), we may consider $M_{\pp}$ and $M_{\pp^*}$ as modules over the homogeneous localization $R_{(\pp)}$ and replace $R$ by it. Thus we may assume that $R/\pp^*\simeq k[t, t^{-1}]$, where $k$ is a field and $t$ is an element of positive degree that is transcendental over $k$. Since this ring is a PID, there is some $a\in R\setminus\pp$ such that $\pp=\pp^*+(a)$. Hence we have a short exact sequence: 
$$
0\to R/\pp^*\stackrel{a}{\to} R/\pp^*\to R/\pp\to 0,
$$
which gives rise to the following long exact sequence:
$$
\cdots\to \*\Ext^i_R(R/\pp^*,M)\stackrel{a}{\to}\*\Ext^i_R(R/\pp^*,M)\to\\*\Ext^{i+1}_R(R/\pp,M)\to\cdots
$$

Now, for all $i$, $\*\Ext^i_R(R/\pp^*,M)$ is a $\ZZ$-graded $R/\pp^*=k[t, t^{-1}]$-module, and, therefore, is free. Consequently, since $a\notin\pp^*$, the map
$$
\*\Ext^i_R(R/\pp^*,M)\stackrel{a}{\to}\*\Ext^i_R(R/\pp^*,M)
$$
is injective, whence
$$
\Ext^{i+1}_R(R/\pp,M)\simeq\*\Ext^i_R(R/\pp^*,M)/a\cdot \*\Ext^i_R(R/\pp^*,M).
$$

On the other hand, $\pp=\pp^*+(a)$ implies that $\Ext^{i+1}_R(R/\pp,M)$ is a free $(R/\pp)$-module of the same rank as the free ($R/\pp^*)$-module $\*\Ext^i_R(R/\pp^*,M)$. We deduce that
\begin{multline*}
\dim_{k(\pp)}\Ext^{i+1}_{R_{\pp}}(k(\pp),M_{\pp})=
\rank_{R/\pp}\Ext^{i+1}_R(R/\pp, M)\\
 = \rank_{R/\pp^*}\*\Ext^i_R(R/\pp^*, M)=\dim_{k(\pp^*)}\*\Ext^i_{R_{\pp^*}}(k(\pp^*),M_{\pp^*}),
\end{multline*}
where, for better readability, we have set $k(\pp) = R_\pp/\pp R_\pp$ and $k(\pp^*) = R_{\pp^*}/{\pp^*} R_{\pp^*}$. This equation proves (a)--(d).

For (e), one considers a graded free resolution 
$$
\FF:\cdots \to F_{i+1}\stackrel{\phi_{i+1}}{\to} F_i\to \dots\to F_1\stackrel{\phi_{1}}{\to} F_0
$$
 of $M$. In order to compute $\pd_{R_\pp}$, we localize the resolution and check at which homological degree it splits. This homological degree is determined by the ideals $I_{r_j}(\phi_j)$ of minors of size $r_j= \rank \phi_j$: $\pd_{R_\pp} M_\pp = n$ if and only $I_{r_{n+1}}(\phi_{n+1})\not\subset \pp$. (see \cite[Section 1.4]{BH} for the linear algebra involved).  Since the free resolution is graded, the minors generating $J = I_{r_{n+1}}(\phi_{n+1})$ are homogeneous. Thus $J\not\subset \pp$ if and only if $J\not\subset \pp^*$.
\end{proof}

Applying the inductive argument presented in the proof of Theorem \ref{chain}, we generalize Theorem \ref{invariants} to $\ZZ^r$-gradings. 

\begin{Theorem}\label{depth-type} 
Let $R$ be a Noetherian $\ZZ^r$-graded ring, $M$ a finite $\ZZ^r$-graded $R$-module and $\pp\in\,\Supp\,M$ a prime ideal. Then
\begin{enumerate}
\item[(a)] $\depth M_{\pp}- \depth M_{\pp^*}=\dim\,M_{\pp}- \dim\,M_{\pp^*} =\tau(\pp)$. 
\item[(b)] $r(M_{\pp})= r(M_{\pp^*}).$
\item[(c)] If $\pp$ is nongraded, then $\mu_i(\pp, M) = 0$ for $i < \tau(\pp)$ and $\mu_{i+\tau(\pp)}(\pp, M)=\mu_i(\pp^*, M)$ for every nonnegative integer $i$.
\item[(d)] $\id_{R_\pp} M_\pp = \id_{R_{\pp^*}} M_{\pp^*} + \tau(\pp)$.
\item[(e)] $\pd_{R_\pp} M_\pp = \pd_{R_{\pp^*}} M_{\pp^*}$.
\end{enumerate}
\end{Theorem}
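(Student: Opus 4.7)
The plan is to bootstrap Theorem \ref{invariants} from $\ZZ$-gradings to $\ZZ^r$-gradings by iterating the step-by-step construction of Lemma \ref{step_by_step}, exactly as in the proof of Theorem \ref{chain}. I would set $\pp^{(0)} = \pp$ and, for $i=1,\dots,r$, let $\pp^{(i)}$ be the ideal generated by the $\delta_i$-homogeneous elements of $\pp^{(i-1)}$. Iterated application of Lemma \ref{step_by_step} yields $\pp^{(r)} = \pp^*$, and iterated application of Lemma \ref{star}(a), with $R$ regarded as a $\ZZ$-graded ring via $\delta_i$ at step $i$, shows that every $\pp^{(i)}$ is prime. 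Since $\pp \supseteq \pp^{(i)} \supseteq \pp^*$ and $\pp^*\in \Supp M$ by Lemma \ref{star}(b)(i), every $\pp^{(i)}$ lies in $\Supp M$. By definition of $\tau(\pp)$, exactly $\tau(\pp)$ of the inclusions $\pp^{(i)} \subseteq \pp^{(i-1)}$ are strict, and I will call these the jumps.

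The crucial observation is that in the $\delta_i$-grading, $\pp^{(i)}$ is precisely the star ideal of $\pp^{(i-1)}$, and $\pp^{(i-1)}$ is nongraded in this $\ZZ$-grading if and only if step $i$ is a jump. All five invariants involved depend only on the ordinary localization of $M$ at the respective prime, not on which grading is used to compute them, so Theorem \ref{invariants} applies verbatim to each consecutive pair $(\pp^{(i-1)}, \pp^{(i)})$ viewed in the $\delta_i$-grading. At a jump, $\depth$ and $\id$ each increase by $1$ in passing from $\pp^{(i)}$ to $\pp^{(i-1)}$, the type and the projective dimension are preserved, and the Bass numbers satisfy $\mu_0(\pp^{(i-1)}, M) = 0$ together with $\mu_{j+1}(\pp^{(i-1)}, M) = \mu_j(\pp^{(i)}, M)$ for $j \ge 0$. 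At a non-jump we have $\pp^{(i-1)} = \pp^{(i)}$, so all invariants are trivially unchanged. Accumulating over the $\tau(\pp)$ jumps immediately yields the depth formula in (a) and parts (b), (d), (e); a telescoping argument on the Bass-number relations then produces both the vanishing range $\mu_i(\pp, M) = 0$ for $i < \tau(\pp)$ and the shift $\mu_{i+\tau(\pp)}(\pp, M) = \mu_i(\pp^*, M)$ in (c).

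For the dimension equality in (a), I would note that in the $\ZZ$-graded case Theorem \ref{chain}(b) forces $\dim M_\pp = \dim M_{\pp^*} + 1$ at any nongraded $\pp$, because $\sigma$ of the relevant quotient is at most $1$. Applying this at each jump in the chain $\pp^{(0)} \supseteq \pp^{(1)} \supseteq \cdots \supseteq \pp^{(r)}$ and summing gives $\dim M_\pp - \dim M_{\pp^*} = \tau(\pp)$, matching the depth drop. I anticipate the only delicate point is the index bookkeeping in part (c), where one needs an inner induction on the number of jumps traversed so that the shift accumulates correctly; everything else reduces mechanically to Theorem \ref{invariants} once the chain of primes is in place.
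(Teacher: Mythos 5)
Your proposal is correct and is essentially the paper's own argument: the authors explicitly defer the proof to "the inductive argument presented in the proof of Theorem~\ref{chain}," namely the chain $\pp^{(0)}=\pp\supseteq\pp^{(1)}\supseteq\cdots\supseteq\pp^{(r)}=\pp^{*}$ obtained by taking $\delta_i$-homogeneous elements at step $i$, with Theorem~\ref{invariants} applied to each jump in the appropriate $\ZZ$-grading. Your spelled-out bookkeeping of the jumps, the identification $\tau(\pp)=\vert\{i:\pp^{(i)}\neq\pp^{(i-1)}\}\vert$, and the telescoping of the Bass-number relation match the intended proof.
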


From Theorem \ref{depth-type}, we deduce that, given a finite $\ZZ^r$-graded $R$-module $M$ and a prime ideal $\pp$ of $R$, the localized module $M_{\pp}$ is Cohen--Macaulay if and only if this is true for $M_{\pp^*}$. Since a local ring is Gorenstein if and only if it is Cohen--Macaulay of type $1$, it follows that the local ring $R_{\pp}$ is Gorenstein if and only if this is true for $R_{\pp^*}$.  Now, since $M$ is Cohen--Macaulay and $R$ is Gorenstein if and only if so are all their localizations at prime ideals, we obtain the following characterizations.

\begin{Theorem}\label{CM} Let $R$ be a Noetherian $\ZZ^r$-graded ring, and $M$ a finite $\ZZ^r$-graded $R$-module. Then the following conditions are equivalent:
\begin{enumerate}
\item[(a)] $M$ is Cohen--Macaulay.
\item[(b)] For all prime ideals $\pp$, the  $R_{\pp^*}$-module $M_{\pp^*}$ is Cohen-Macaulay.
\end{enumerate}
\end{Theorem}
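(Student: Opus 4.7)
The plan is to derive this directly from Theorem \ref{depth-type}(a), essentially as anticipated in the paragraph preceding the statement. The reformulation of that theorem I want to extract is the simple equivalence
$$
\depth M_\pp = \dim M_\pp \quad\Longleftrightarrow\quad \depth M_{\pp^*} = \dim M_{\pp^*},
$$
valid for every $\pp\in\Supp M$. This follows immediately from Theorem \ref{depth-type}(a), because that theorem asserts that the differences $\depth M_\pp - \depth M_{\pp^*}$ and $\dim M_\pp - \dim M_{\pp^*}$ are both equal to $\tau(\pp)$, hence coincide. In other words, $M_\pp$ is Cohen--Macaulay if and only if $M_{\pp^*}$ is Cohen--Macaulay.

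For the implication (a) $\Rightarrow$ (b), I would simply recall that Cohen--Macaulayness of $M$ means that $M_\qq$ is Cohen--Macaulay for \emph{every} prime $\qq$ of $R$; specializing to $\qq = \pp^*$ for an arbitrary prime $\pp$ yields (b).

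For (b) $\Rightarrow$ (a), I would let $\qq$ be an arbitrary prime of $R$ and apply the hypothesis to $\pp = \qq$. This gives that $M_{\qq^*}$ is Cohen--Macaulay. The equivalence displayed above then shows that $M_\qq$ is Cohen--Macaulay as well. Since $\qq$ was arbitrary, $M$ is Cohen--Macaulay.

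There is no real obstacle here: the proof is a two-line deduction from Theorem \ref{depth-type}(a) combined with the local characterization of Cohen--Macaulayness. The only bookkeeping point worth flagging is that condition (b), as $\pp$ ranges over all primes, is tested on the graded primes $\pp^*$, but this is exactly what is needed because the hypothesis passes to an arbitrary $\qq$ through the choice $\pp = \qq$, and the equivalence at the level of $\pp$ and $\pp^*$ does the rest.
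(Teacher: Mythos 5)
Your argument is correct and is essentially identical to the paper's own deduction: the paper also reads off from Theorem \ref{depth-type}(a) that $M_\pp$ is Cohen--Macaulay if and only if $M_{\pp^*}$ is, and then invokes the fact that Cohen--Macaulayness of $M$ is equivalent to Cohen--Macaulayness of all its localizations. No discrepancy to report.
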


Theorem \ref{CM} was proved for $r=1$ by Matijevic--Roberts \cite{MR} using different arguments, answering a question of Nagata.

\begin{Theorem}\label{Gorenstein} Let $R$ be a Noetherian $\ZZ^r$-graded ring. Then the following conditions are equivalent:
\begin{enumerate}
\item[(a)] $R$ is Gorenstein.
\item[(b)] For all prime ideals $\pp$, the ring $R_{\pp^*}$ is Gorenstein.
\end{enumerate}
\end{Theorem}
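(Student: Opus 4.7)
The plan is to reduce Theorem \ref{Gorenstein} to the pointwise equivalence already isolated in the paragraph preceding Theorem \ref{CM}: for every prime $\pp$ of $R$, the local ring $R_\pp$ is Gorenstein if and only if $R_{\pp^*}$ is Gorenstein. Once this is in hand, the theorem follows at once from the standard fact that a Noetherian ring is Gorenstein precisely when each of its localizations at a prime (equivalently, at a maximal ideal) is Gorenstein.

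To establish the local equivalence cleanly, I would apply Theorem \ref{depth-type} with $M=R$. Part (a) yields $\depth R_\pp - \depth R_{\pp^*} = \dim R_\pp - \dim R_{\pp^*} = \tau(\pp)$, which is precisely the Cohen--Macaulay transfer recorded in Theorem \ref{CM}: $R_\pp$ is Cohen--Macaulay if and only if $R_{\pp^*}$ is. Part (b) gives the type equality $r(R_\pp) = r(R_{\pp^*})$. Since a Noetherian local ring is Gorenstein exactly when it is Cohen--Macaulay of type one, combining these two statements proves $R_\pp$ is Gorenstein if and only if $R_{\pp^*}$ is.

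The two directions of the theorem then follow with no further work. For (a) $\Rightarrow$ (b), if $R$ is Gorenstein then $R_\qq$ is Gorenstein at every prime $\qq$, and in particular at $\qq=\pp^*$ for every $\pp$. Conversely, for (b) $\Rightarrow$ (a), given an arbitrary prime $\pp$, the hypothesis asserts that $R_{\pp^*}$ is Gorenstein, and the local equivalence promotes this to $R_\pp$ being Gorenstein; since this holds for every $\pp$, the ring $R$ is Gorenstein. I do not foresee any genuine obstacle: all substantive content has already been encoded in Theorems \ref{depth-type} and \ref{CM}. The one thing to be careful about is invoking the Noetherian global-versus-local characterization of the Gorenstein property (via finite injective dimension, equivalently Gorensteinness at every prime), but this is entirely classical.
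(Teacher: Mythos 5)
Your proposal is correct and follows essentially the same route the paper takes: the paper derives the pointwise equivalence (that $R_\pp$ is Gorenstein iff $R_{\pp^*}$ is) directly from Theorem~\ref{depth-type} together with the characterization of Gorenstein as Cohen--Macaulay of type one, and then concludes by the standard local-to-global characterization of Gorenstein rings. Nothing to add.
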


Below we will also prove a variant of Theorem \ref{Gorenstein} for regularity.

Let $M$ be a graded module. In the category $\cM_0(R)$ we can define its projective dimension $\*\pd M$ and the injective dimension $\*\id M$ by the infimum of the lengths of projective and injective resolutions in $\cM_0(R)$. As pointed out in Section \ref{basic}, a graded module $M$ is a projective object of $\cM_0(R)$ if and only if it is a projective module in the plain sense. Thus $\*\pd M = \pd M$ for all modules $M$. We have also mentioned that the analogous statement for injective modules is not true. However, $\*\id M$ can be measured by the Bass numbers with respect to graded prime ideals; see \cite[3,6.4]{BH}. Using Theorem \ref{depth-type}, we thus get:

\begin{Theorem}\label{pd_id_r}
Let $R$ be a $\ZZ^r$-graded ring and $M$ a  $\ZZ^r$-graded $R$-module. Then
\begin{enumerate}
\item[(a)] $\pd _R M= \*\pd _R M$.
\item[(b)] $\id _R M \leq \*\id _R M+\sigma(R)$.
\end{enumerate} 
\end{Theorem}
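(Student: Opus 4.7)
For (a), the plan is to extract both inequalities from graded free resolutions. The direction $\pd_R M\leq \*\pd_R M$ is immediate because every graded projective resolution is also a projective resolution. For the reverse, I start with a graded free resolution $F_\bullet\to M$ (available by the construction recalled in Section \ref{basic}) and set $n=\pd_R M$. The $n$-th syzygy $K=\Ker(F_{n-1}\to F_{n-2})$ is a graded submodule of $F_{n-1}$ and is projective as an $R$-module, since the terms $F_{n-1},\dots,F_0$ are free and $\pd_R M\leq n$. By the observation in Section \ref{basic} that projective objects of $\cM_0(R)$ coincide with projective $R$-modules, $K$ is a projective object of $\cM_0(R)$. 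Truncating $F_\bullet$ at this step gives a graded projective resolution of length $\leq n$, whence $\*\pd_R M\leq\pd_R M$.

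For (b), my plan is to control $\id_R M$ through Bass numbers. I recall the standard characterization
$$
\id_R M=\sup\bigl\{i : \mu_i(\pp,M)\neq 0 \text{ for some } \pp\in\Supp\, M\bigr\},
$$
and the analogous formula for $\*\id_R M$ with $\pp$ restricted to $\ZZ^r$-graded primes (the $\ZZ$-graded version is \cite[3.6.4]{BH}, and its proof transfers to the multigraded setting). Now fix $\pp$ and $i$ with $\mu_i(\pp,M)\neq 0$. If $\pp$ is already $\ZZ^r$-graded, then $i\leq\*\id_R M$ directly. Otherwise, Theorem \ref{depth-type}(c) forces $i\geq\tau(\pp)$ together with $\mu_{i-\tau(\pp)}(\pp^*,M)\neq 0$; since $\pp^*$ is graded this gives $i-\tau(\pp)\leq\*\id_R M$. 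Combining with the bound $\tau(\pp)\leq\sigma(R)$ from Theorem \ref{chain}(b) yields $i\leq\*\id_R M+\sigma(R)$, and (b) follows by passing to the supremum.

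The delicate point will be the appeal to the graded characterization of $\*\id_R M$: \cite[3.6.4]{BH} is stated only for $\ZZ$-gradings, and one should verify that its proof, which rests on the structure of indecomposable injective objects of $\cM_0(R)$, extends verbatim to $\ZZ^r$-gradings. Aside from that verification, the argument is a purely formal combination of Theorems \ref{chain}(b) and \ref{depth-type}(c) with the Bass-number description of injective dimension, and no further computation is required.
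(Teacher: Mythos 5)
Your proof is correct and follows the same route the paper sketches in the paragraph preceding the theorem: part (a) from the identification of projective objects of $\cM_0(R)$ with graded projective $R$-modules, and part (b) from the Bass-number description of $\*\id$ via graded primes (\cite[3.6.4]{BH}) combined with Theorem \ref{depth-type}(c) and the bound $\tau(\pp)\leq\sigma(R)$ supplied by Theorem \ref{chain}(b). You have merely made explicit the syzygy argument for (a) and the index bookkeeping for (b) that the paper leaves implicit.
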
 

 A different proof (b) is provided in \cite[1.3.6]{GW}. \bigskip

\noindent\emph{\*Local rings}\enspace The graded counterparts of local rings are called \emph{$\*local$}: a graded ring is \*local if its homogeneous nonunits generate a proper ideal. Equivalently, the set of graded ideals has a unique maximal element with respect to inclusion. Usually we denote this \emph{\*maximal} ideal by $\mm$. The \emph{\*dimension} of a graded $R$-module $M$ is given by $\dim M_\mm$.

Let $R$ be a $\ZZ$-graded ring for which $R_0$ is a field, and whose semigroup of degrees, $\bigl\{\deg x: x\in R \text{ homogeneous} \bigr\}$ has no invertible element $\neq 0$. Such a \emph{positively graded} ring is the prototype of a \*local ring for which the \*maximal ideal is an ordinary maximal ideal. On the opposite side, we have a Laurent polynomial ring over a field whose \*maximal ideal is zero. For a prime ideal $\pp$ of a $\ZZ^r$-graded ring, the homogeneous localization $R_{(\*\pp)}$ is \*local with \*maximal ideal $\pp^*R_{(\*\pp)}$.

It is an important fact that the main invariants of finite graded $R$-modules $M$ over \*local Noetherian rings can be computed by localization. In particular this is true for \emph{minimal systems of generators}, i.e., homogeneous systems of generators of $M$ that are minimal with respect to inclusion. Let $x_1,\dots,x_n$ be such a system, and assume that
$$
a_1x_1 + \dots + a_nx_n = 0, \qquad a_1,\dots,a_n \in R.
$$
Splitting the $a_i$ into their homogeneous components, we can decompose the equation degree by degree. The minimality of $x_1,\dots,x_n$ implies that the homogeneous components of the $a_i$, and therefore the $a_i$ themselves, belong to the \*maximal ideal $\mm$. In more abstract terms: let $F_0 =\Dirsum_{i=1}^n R(-\deg x_i)$ and define the linear map $\phi_0: F_0\to M$ by sending the basis element $1$ of the $i$-th summand to $x_i$. Then $U_1 = \Ker \phi_0$ is again graded and contained in $\mm F_0$. Applying the same construction to $U_1$ and continuing, one obtains a \emph{minimal graded free resolution} $\FF$ of $M$. It follows that $\FF_\mm$ is a minimal free resolution of $M_\mm$ over the local ring $R_\mm$.

\begin{Theorem}\label{*local}
Let $M$ be a finite graded module over the \*local Noetherian ring $R$ with \*maximal ideal $\mm$. Then the following hold:
\begin{enumerate}
\item[(a)] A homogeneous minimal system of generators of $M$ is also a minimal system of generators of the localization $M_\mm$ over $R_\mm$.
\item[(b)] If $\FF$ is a minimal graded free resolution of $M$ over $R$, then $\FF_\mm$ is a minimal free resolution of $M_\mm$ over $R_\mm$.
\item[(c) ] $\pd _R M = \pd_{R_\mm} M_\mm$.
\item[(d)] If $M$ is projective, then it is free.
\end{enumerate}
\end{Theorem}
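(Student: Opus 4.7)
The plan is to deduce parts (b)--(d) from part (a), which is itself an immediate consequence of the graded Nakayama argument carried out in the paragraph preceding the theorem. First, for (a), I take a homogeneous minimal system of generators $x_1,\dots,x_n$ of $M$ and form the canonical surjection $\phi_0\colon F_0 = \Dirsum_{i=1}^n R(-\deg x_i)\to M$ sending the $i$-th basis element to $x_i$. The discussion preceding the theorem shows that $\Ker\phi_0 \subset \mm F_0$. Localizing at $\mm$ yields a surjection $(F_0)_\mm \to M_\mm$ whose kernel lies in $\mm R_\mm (F_0)_\mm$, and the ordinary Nakayama lemma over the Noetherian local ring $R_\mm$ forces $x_1,\dots,x_n$ to remain a minimal generating system of $M_\mm$.

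For (b), I iterate (a) along the resolution: in a minimal graded free resolution $\FF$, each syzygy module $U_i$ is covered by the graded free module $F_i$ via a minimal homogeneous generating set, so applying (a) at every stage shows that all matrix entries of every boundary $\phi_i$ lie in $\mm$. After localization, these entries lie in $\mm R_\mm$, which is precisely the characterization that $\FF_\mm$ is a minimal free resolution of $M_\mm$ over the local ring $R_\mm$.

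Part (c) then reduces to a length count: by Theorem \ref{pd_id_r}(a) we have $\pd_R M = \*\pd_R M$, and the latter equals the length of $\FF$; by (b) that length also equals the length of $\FF_\mm$, which is $\pd_{R_\mm} M_\mm$. For (d), I observe that if $M$ is projective then $\Ker \phi_0$ is a direct summand of $F_0$ contained in $\mm F_0$; projecting $\mm F_0 = \mm\cdot\Ker\phi_0 \oplus \mm\cdot N$ onto $\Ker \phi_0$ forces $\Ker\phi_0 = \mm\cdot\Ker\phi_0$, and the graded Nakayama lemma yields $\Ker\phi_0 = 0$, so $\phi_0$ is an isomorphism and $M$ is graded free.

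The only step that really demands care is (a): one must verify that the containment $\Ker\phi_0 \subset \mm F_0$ passes to the localization as $(\Ker\phi_0)_\mm \subset \mm R_\mm (F_0)_\mm$, which follows at once from the exactness of localization. Once this step is in place, everything in (b)--(d) is a formal bootstrap.
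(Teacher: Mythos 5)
Your proof of (a)--(c) follows the same route as the paper's: graded Nakayama gives $\Ker\phi_0\subset\mm F_0$ for (a), iterating this along the resolution gives (b), and (c) is a length count. One small remark on (c): the detour through Theorem~\ref{pd_id_r}(a) together with the identification $\*\pd_R M=\text{length}(\FF)$ implicitly uses the uniqueness of minimal graded free resolutions, which the paper only asserts \emph{after} this theorem; a cleaner route is to note that $\pd_{R_\mm}M_\mm\le\pd_R M$ trivially, while (b) and the observation that a finitely generated graded free module $F_i$ with $(F_i)_\mm=0$ must already be zero give the reverse inequality. For (d), however, your argument genuinely diverges from the paper's. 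The paper localizes: $M_\mm$ is projective over the local ring $R_\mm$, hence free, so by (a) a minimal homogeneous generating set becomes a basis after localization, whence the relation module vanishes already over $R$. Your argument stays entirely in the graded category: you split $F_0=\Ker\phi_0\oplus N$, project the inclusion $\Ker\phi_0\subset\mm F_0$ onto the first summand to get $\Ker\phi_0=\mm\Ker\phi_0$, and finish with graded Nakayama. Both are correct; yours has the advantage of avoiding any appeal to the structure of projective modules over local rings, at the small cost of needing to observe that the splitting of $\phi_0$ can be chosen homogeneous --- which holds since $\*\Hom_R(M,F_0)=\Hom_R(M,F_0)$ for finite $M$ over Noetherian $R$, so one may pass to the degree-zero component of an arbitrary $R$-linear splitting.
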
 

The crucial statement is (a), which we have verified above. Then (b) follows immediately, as we have also seen. Since the projective dimension is the length of a minimal free resolution, (c) is an obvious consequence. For (d), one uses that a minimal homogeneous system of generators becomes linearly independent upon localization since $M_\mm$ is free. But then it was linearly independent from the beginning.

Theorem \ref{*local} can be extended: all statements in \cite[1.5.15]{BH}  hold for $\ZZ^r$-graded \*local rings as well. Moreover, one can show that minimal graded free resolutions are unique up to isomorphism and define graded Betti numbers. They belong to the most important numerical invariants of combinatorial commutative algebra.

We now turn to the study of the regularity of rings. Recall that a Noetherian local ring $R$ with maximal ideal $\mm$ is called \textit{regular} if $\mm$ is generated by $\dim R$ elements. By the theorem of Auslander--Buchsbaum--Serre, $R$ is regular if and only if all finite $§R$-modules have finite projective dimension. It is sufficient that $\pd_R R/\mm<\infty$. Let us first relate the regularity of $R_\pp$ and $R_{\pp^*}$.

\begin{Proposition}
Let $R$ be a Noetherian $\ZZ^r$-graded ring and $\pp$ a prime ideal. Then $R_\pp$ is regular if and only if $R_{\pp^*}$ is regular.
\end{Proposition}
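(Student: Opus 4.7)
The plan is to reduce the statement to the Auslander--Buchsbaum--Serre characterization: a Noetherian local ring $A$ is regular if and only if its residue field has finite projective dimension (equivalently, every finite $A$-module does). Theorem~\ref{depth-type}(e), applied to the finite graded module $R/\pp^*$, is the bridge between the two localizations, since $(R/\pp^*)_{\pp^*} = R_{\pp^*}/\pp^* R_{\pp^*} = k(\pp^*)$ while $(R/\pp^*)_\pp = R_\pp/\pp^* R_\pp$.

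Assume first that $R_\pp$ is regular. Then the finitely generated $R_\pp$-module $R_\pp/\pp^* R_\pp$ has finite projective dimension, and Theorem~\ref{depth-type}(e) transfers this to $\pd_{R_{\pp^*}} k(\pp^*) < \infty$, proving $R_{\pp^*}$ regular. For the converse, assume $R_{\pp^*}$ is regular, so $\pd_{R_{\pp^*}} k(\pp^*) < \infty$, and Theorem~\ref{depth-type}(e) again yields $\pd_{R_\pp}(R_\pp/\pp^* R_\pp) < \infty$. To conclude $\pd_{R_\pp} k(\pp) < \infty$, I plan to show first that $R_\pp/\pp^* R_\pp$ is itself a regular local ring; its residue field is $k(\pp)$, so $\pd_{R_\pp/\pp^* R_\pp} k(\pp) < \infty$. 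A finite $R_\pp/\pp^* R_\pp$-free resolution of $k(\pp)$, with each free module replaced by a finite $R_\pp$-resolution of $R_\pp/\pp^* R_\pp$, assembles into a finite $R_\pp$-resolution of $k(\pp)$.

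The main obstacle is therefore the regularity of $R_\pp/\pp^* R_\pp$. This is exactly what Lemma~\ref{Laurent} provides: by the very definition of $\pp^*$, the prime $\bar\pp = \pp/\pp^*$ of the graded domain $S = R/\pp^*$ contains no nonzero homogeneous element. Hence the homogeneous localization $S_{(\bar\pp)}$ inverts every nonzero homogeneous element of $S$ and, by Lemma~\ref{Laurent}, equals a Laurent polynomial ring $L$ over a field, which is regular. The ordinary localization $R_\pp/\pp^* R_\pp = S_{\bar\pp}$ is then the further localization $L_{\bar\pp L}$ of the regular ring $L$ at a prime, hence regular, completing the argument.
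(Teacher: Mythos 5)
Your proof is correct, but it follows a different route from the paper's. The paper first reduces to $r=1$ by the standard induction on the number of gradings, then exploits the fact that $R/\pp^*$ (after homogeneous localization) is a PID $k[t,t^{-1}]$ to write $\pp = \pp^* + (a)$ and obtain the short exact sequence $0 \to R/\pp^* \xrightarrow{a} R/\pp^* \to R/\pp \to 0$; combined with $\pd_R R/\pp^* < \infty$ (via Theorem~\ref{*local}(c)), this gives $\pd_R R/\pp < \infty$ and hence regularity of $R_\pp$. Your argument avoids the reduction to $r=1$ entirely: you invoke Lemma~\ref{Laurent} for general $r$ to see that $R_\pp/\pp^* R_\pp$ is a localization of a Laurent polynomial ring over a field, hence regular, and then use the general change-of-rings inequality $\pd_A M \le \pd_B M + \pd_A B$ for the surjection $A = R_\pp \twoheadrightarrow B = R_\pp/\pp^* R_\pp$ and $M = k(\pp)$, together with Theorem~\ref{depth-type}(e) to import $\pd_A B < \infty$ from the regularity of $R_{\pp^*}$. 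The paper's SES is really the $r=1$, $\dim B = 1$ instance of that change-of-rings bound, so your argument is a genuine generalization rather than a variant; what it buys is uniformity in $r$ at the cost of requiring the (standard, but not entirely trivial) change-of-rings step, which you should spell out if you intend to include it. For the forward direction the paper is more direct: since $\pp^* \subset \pp$, $R_{\pp^*}$ is a further localization of $R_\pp$, and regularity is inherited by localization, so there is no need to go through $R/\pp^*$ and Theorem~\ref{depth-type}(e).
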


\begin{proof}
Our standard induction argument allows us to assume that $r=1$.
The regularity of $R_{\pp^*}$ follows from that of $R_\pp$ immediately from the Auslander--Buchsbaum--Serre theorem. For the converse implication, we first replace $R$ by the homogeneous localization $R_{(\pp)}$. So we may assume that $R$ is \*local. Again we use the short exact sequence 
$$
0\to R/\pp^*\stackrel{a}{\to} R/\pp^*\to R/\pp\to 0
$$
that has appeared in the proof of Theorem \ref{invariants}. The assumption is that $R_{\pp^*}/ \pp^* R_{\pp^*}$ has finite projective dimension over $R_{\pp^*}$. But this implies $\pd_R  R/\pp^* < \infty$ by Theorem \ref{*local}(c). The exact sequence above then yields $\pd_R R/\pp \le \pd_R  R/\pp^* + 1 <\infty$. Finally, localizing at $\pp$ shows the regularity of $R_\pp$.   
\end{proof}

A Noetherian ring is called regular if so are all its localizations at its prime ideals (equivalently, at its maximal ideals). This leads us to the next theorem of Matijevic--Roberts type, also obtained in \cite[1.2.5]{GW}.

\begin{Theorem}\label{regular_r} Let $R$ be a Noetherian $\ZZ^r$-graded ring.  Then the following conditions are equivalent:
\begin{enumerate}
\item[(a)] $R$ is regular.
\item[(b)] For all prime ideals $\pp$, the local ring $R_{\pp^*}$ is regular.
\end{enumerate}
\end{Theorem}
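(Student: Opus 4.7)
The plan is to derive this characterization as a direct corollary of the Proposition that was just proved, which says $R_\pp$ is regular if and only if $R_{\pp^*}$ is regular for any prime ideal $\pp$ of $R$. Given that result, the theorem reduces to unwinding the definition of regularity, together with the fact that $\pp^*$ is itself a prime ideal (Lemma \ref{star}(a)).

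For the implication (a) $\implies$ (b), I would argue as follows. Assume $R$ is regular and fix any prime ideal $\pp$. Then $R_\pp$ is regular by definition, and the preceding Proposition immediately gives that $R_{\pp^*}$ is regular as well.

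For the converse (b) $\implies$ (a), let $\pp$ be any prime of $R$. By Lemma \ref{star}(a) the ideal $\pp^*$ is itself a (graded) prime, so applying hypothesis (b) with the prime $\pp$ directly yields that $R_{\pp^*}$ is regular. Invoking the Proposition once more, this forces $R_\pp$ to be regular. Since $\pp$ was arbitrary, every localization of $R$ at a prime is regular, and hence $R$ is regular by definition.

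There is essentially no obstacle: all of the real work has already been done in the preceding Proposition, whose proof leveraged the short exact sequence $0 \to R/\pp^* \xrightarrow{a} R/\pp^* \to R/\pp \to 0$ together with Theorem \ref{*local}(c). The role of the present theorem is merely to package that pointwise equivalence into a global statement of Matijevic--Roberts type, observing that one need only test regularity at the graded primes $\pp^*$ rather than at all primes.
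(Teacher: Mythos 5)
Your proof is correct and follows exactly the route the paper intends: the theorem is presented as an immediate consequence of the preceding Proposition ($R_\pp$ regular $\iff$ $R_{\pp^*}$ regular), combined with the definition of regularity for a Noetherian ring as regularity of all its localizations at primes. Nothing further is needed.
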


We note some properties of  regular \*local rings that will be used later.

\begin{Proposition}\label{generators}
Let $R$ be a regular \*local ring with \*maximal ideal $\mm$. Then the following hold:
\begin{enumerate}
	\item[(a)] $R$ is an integral domain. 
	
	\item[(b)] $\mm$ is generated by an $R$-sequence of homogeneous elements $x_1,\dots,x_m$, $m = \*\dim R$.
	
	\item[(c)] The semigroup $\bigl\{\deg y: y\in R \text{ homogeneous} \bigr\}$ is generated by $\deg x_i$, $i = 1,\dots, m$, and the degrees of the homogeneous units.
\end{enumerate}
\end{Proposition}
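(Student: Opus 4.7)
For (a), the key observation is the \*local principle that $s\in R\setminus\mm$ always has some homogeneous component $s_g$ that is a unit (because $\mm$ is generated by the homogeneous nonunits, so a homogeneous element lies outside $\mm$ only when it is itself a unit). Theorem \ref{*local}(c) gives $\pd_{R_\mm}R_\mm/\mm R_\mm = \pd_R R/\mm < \infty$, so $R_\mm$ is a regular local ring in the classical sense, in particular a domain. By Lemma \ref{star}(b)(ii), every $\pp\in\mathrm{Ass}(R)$ is $\ZZ^r$-graded, and localization at $\mm$ places $\pp R_\mm$ in $\mathrm{Ass}(R_\mm)=\{0\}$, so $\pp R_\mm = 0$. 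The reduction is then immediate: for homogeneous $x\in\pp$ there is $s\in R\setminus\mm$ with $sx=0$, hence $s_g x=0$ for every $g$; choosing $s_g$ to be a unit forces $x=0$, so $\pp = 0$. Thus $\mathrm{Ass}(R)=\{0\}$ and $R$ is a domain.

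For (b), Theorem \ref{*local}(a) says that any minimal homogeneous system of generators $x_1,\dots,x_n$ of $\mm$ remains minimal after localizing to $R_\mm$, so $n$ is the embedding dimension of the regular local ring $R_\mm$, which equals $m = \*\dim R$. I would then induct on $i$: the quotient $R/(x_1,\dots,x_i)$ is again \*local, its localization at $\mm$ is the regular local ring $R_\mm/(x_1,\dots,x_i)R_\mm$, and Theorem \ref{*local}(c) transfers finite projective dimension of the residue field back, so the quotient also satisfies the hypothesis of the proposition. Part (a) applied to it says it is a domain, and the image of $x_{i+1}$ there is nonzero (otherwise $\mm$ would be generated by fewer than $m$ homogeneous elements, contradicting minimality), hence is a non-zerodivisor on $R/(x_1,\dots,x_i)$.

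For (c), fix a nonzero homogeneous $y\in R$. Krull's intersection theorem applied in $R_\mm$ and pulled back along the injection $R\hookrightarrow R_\mm$ (which is injective because $R$ is a domain) yields $\bigcap_n\mm^n=0$, so there is a maximal $n\geq 0$ with $y\in\mm^n$. If $n=0$, then $y\notin\mm$ is a homogeneous unit and $\deg y$ is a unit degree. If $n\geq 1$, expand $y=\sum_{|\alpha|=n}c_\alpha x^\alpha$ and replace each $c_\alpha$ by its homogeneous component of degree $\deg y-\sum_i a_i\deg x_i$; not all the retained $c_\alpha$ can lie in $\mm$ (else $y\in\mm^{n+1}$), and any such $c_\alpha\notin\mm$ is a homogeneous unit, so $\deg y = \deg c_\alpha + \sum_i a_i\deg x_i$ realizes $\deg y$ in the semigroup generated by the $\deg x_i$ together with the degrees of homogeneous units.

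The main technical input is the \*local principle used in (a), and the main bookkeeping is in (b), where one must verify at each inductive stage that the iterated quotient is again a regular \*local ring of the expected \*dimension. Once (a) is in hand, (b) is a routine induction and (c) reduces to expanding $y$ modulo higher powers of $\mm$.
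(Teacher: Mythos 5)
Your proof is correct. Parts (a) and (b) follow the paper's own route: (a) rests on Lemma~\ref{star}(b)(ii) in both treatments---the paper phrases it more compactly as ``zero-divisors of $R$ lie in $\mm$, so $R\hookrightarrow R_\mm$,'' while you unwind the same fact by checking each $\pp\in\operatorname{Ass}(R)$ dies in $R_\mm$; and (b) is the same induction, with your extra appeal to Theorem~\ref{*local}(c) being a harmless detour, since once one sees $R_\mm/(x_1,\dots,x_i)R_\mm$ is regular local one already knows $R/(x_1,\dots,x_i)$ is regular \*local. Part (c) is where you take a genuinely different path. The paper inducts on $m$: since $R/(x_1)$ is a domain, $(x_1)$ is prime, one factors a homogeneous $y$ uniquely as $y=x_1^e z$ with $x_1\nmid z$, and passes to $R/(x_1)$. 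You instead take the largest $n$ with $y\in\mm^n$ (justified by Krull's intersection theorem, imported through the injection $R\hookrightarrow R_\mm$), expand $y$ as a homogeneous linear combination of degree-$n$ monomials in $x_1,\dots,x_m$, and extract a unit coefficient because not all coefficients can lie in $\mm$. Your argument is more direct and treats all the $x_i$ symmetrically in one step; the paper's induction is shorter on the page but its ``unique presentation $y=x_1^ez$'' likewise relies tacitly on the same Krull-intersection-type finiteness you make explicit. Both are sound proofs of (c).
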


\begin{proof}
All zero-divisors of $R$ are contained in $\mm$ by Lemma \ref{star}. Thus the natural map $R\to R_\mm$ is an injection, and the regular local ring $R_\mm$ is an integral domain. This proves (a).

For (b),we choose a minimal homogeneous system of generators $x_1,\dots,x_m$ of $\mm$. Then $x_1,\dots,x_m$ are a minimal system of generators of the maximal ideal $\mm R_\mm$ of the regular local ring $R_\mm$ by Theorem \ref{*local}. Thus $m = \dim R_\mm = \*\dim R$.

Since $R$ is an integral domain, $x_1$ is not a zero-divisor. We know already that $R/(x_1)$ is again regular \*local so that induction proves that  $x_1,\dots,x_m$ is a regular sequence.

For (c), we use induction on $m$. The case $m = 0$ is trivial. Let $m\ge 1 $. Since $R/(x_1)$ is again a regular \*local ring, it is a domain, and so $x_1$ generates a prime ideal in the ring $R$. Let $y\in R$ be a homogeneous element. It has a unique presentation $y = x_1^e z$ with an integer $e\ge 0$ and a homogeneous element $z$ that is not divisible by $x_1$. Passing to $R/(x_1)$, we see by induction that $\deg z$ lies in the semigroup generated by $\deg x_2,\dots, \deg x_m$ and the degrees of the homogeneous units. In order to get the degree of $y$, it is enough to add $\deg x_1$ to the generators.
\end{proof}

\section{The \*canonical module}\label{canonical}

Now it has become time to introduce a main  actor. The canonical module of an ordinary local Noetherian ring is developed in \cite[Section 3.3]{BH}. By definition it is a maximal Cohen--Macaulay module of finite injective dimension and type $1$. We may speak of \emph{the} canonical module of a local ring since it is uniquely determined up to isomorphism: if $\omega$ and $\omega'$ are canonical modules, then $\Hom_R(\omega,\omega')$ is a free module of rank $1$, and a basis element of this free module is an isomorphism. The canonical module localizes: 
$\omega_\pp$ is the canonical module of $R_\pp$ for all prime ideals $\pp$ of $R$.

An $R$-module $\omega$ is a canonical module of a general Noetherian ring if its localization $\omega_\pp$ is a canonical module of $R_\pp$ for all  prime ideals of $R$, equivalently, for all maximal ideals $\pp$. The canonical module is in general no longer uniquely determined up to isomorphism, but $\Hom_R(\omega,\omega')$ is a rank $1$ projective module. This follows immediately from the local case.

Let $R$ be a \*local ring with \*maximal ideal $\mm$. Note that $R/\mm$ has a natural grading as a residue class \emph{ring} of $R$, and this is the grading used in the following definition. 

\begin{Definition}
Let $(R, \mm)$ be a Cohen--Macaulay \*local ring of
\*dimen\-sion $d$. A finite graded $R$-module $\omega$ is a
{\em \*canonical module\/} of $R$ if there exist homogeneous
isomorphisms
$$
\*\Ext^{i}_{R} (R/\mm ,\omega) \iso
\begin{cases}
0&  i \neq d, \\
R/\mm  &  i = d.
\end{cases}
$$
\end{Definition}

For $\ZZ$-gradings, the following proposition is \cite[3.6.9]{BH}, except that we get rid of the superfluous assumption in (b) that the \*maximal ideal is an ordinary maximal ideal. The proof is almost verbatim the same.

\begin{Proposition}
\label {Uniqueness}
Let $(R,\mm)$ be a Cohen--Macaulay \*local ring, and 
$\omega$ be a \*canonical module of $R$. Then
\begin{enumerate}
\item[(a)] $\omega$ is a canonical module of $R$,
\item[(b)] $\omega$ is uniquely determined up to homogeneous isomorphism.
\end{enumerate}
\end{Proposition}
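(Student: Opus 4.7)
The plan is to follow the $\ZZ$-graded proof \cite[3.6.9]{BH} verbatim, substituting at each step the corresponding $\ZZ^r$-graded tool developed in the preceding sections: Theorem \ref{depth-type} for Bass-number, depth, and type transfer between $\pp$ and $\pp^*$; Theorem \ref{*local} for the interplay between graded and ordinary local arithmetic at $\mm$; Theorem \ref{pd_id_r} for controlling ordinary projective and injective dimensions by the graded ones; and Lemma \ref{Rees} to drive an induction on $d = \*\dim R$.

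For (a), I would argue by induction on $d$. In the base case $d=0$, the defining conditions $\*\Ext^0(R/\mm,\omega)\iso R/\mm$ and $\*\Ext^i(R/\mm,\omega)=0$ for $i>0$ translate, via the identity $\*\Ext = \Ext$ (available since $R/\mm$ is finitely generated), into the statement that $\omega_\mm$ has $1$-dimensional socle $R_\mm/\mm R_\mm$ and is injective; hence $\omega_\mm$ is the injective hull of the residue field of $R_\mm$, i.e., a canonical module. For $d \ge 1$, Lemma \ref{star}(b)(ii) together with homogeneous prime avoidance supplies a homogeneous element $x\in\mm$ that is a non-zerodivisor on both $R$ and $\omega$. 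Lemma \ref{Rees} identifies $\omega/x\omega$, up to degree shift, as a \*canonical module of the Cohen--Macaulay \*local ring $R/(x)$, which has \*dimension $d-1$; the inductive hypothesis promotes it to a canonical module of $R/(x)$. The standard criterion that a module over a local ring is canonical iff its quotient by a regular element is canonical over the corresponding quotient ring then gives $\omega_\pp$ canonical for every $\pp\supseteq(x)$; for primes $\pp$ with $x\notin\pp$, vary $x$ within $\pp^*$ and apply Theorem \ref{depth-type} to reduce the check at $\pp$ to the check at the graded prime $\pp^*$, which is handled by the same inductive mechanism.

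For (b), set $H = \*\Hom_R(\omega,\omega') = \Hom_R(\omega,\omega')$, where the equality uses finite generation and Noetherianity. By part (a) both $\omega$ and $\omega'$ are canonical modules of $R$ in the non-graded sense, so at every prime $\pp$ the module $H_\pp = \Hom_{R_\pp}(\omega_\pp,\omega'_\pp)$ is $R_\pp$-free of rank~$1$ with every generator an isomorphism. Thus $H$ is a rank-$1$ projective $R$-module; by Theorem \ref{pd_id_r}(a) it is also a projective object of $\cM_0(R)$; and by Theorem \ref{*local}(d) it is then a graded free $R$-module, necessarily of rank $1$. A homogeneous basis element $\phi:\omega\to\omega'$ localizes at every prime to a generator of the corresponding rank-$1$ free $\Hom$-module, and is therefore the required homogeneous isomorphism $\omega\iso\omega'$.

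The principal obstacle, and the reason this improves on \cite[3.6.9(b)]{BH}, is the case in which $\mm$ fails to be an ordinary maximal ideal of $R$: then $R/\mm$ is a Laurent polynomial ring over a field rather than a field itself, and the BH proof's tacit identification of $R_\mm/\mm R_\mm$ with a residue field breaks down. The repair is to route every residue-field argument through $\*\Ext = \Ext$, to use Theorem \ref{*local}(a)--(c) to pass between minimal graded and minimal local free resolutions, and to invoke the bound $\id_{R_\mm}\omega_\mm \le \*\id_R\omega+\sigma(R)$ from Theorem \ref{pd_id_r}(b) to keep the ordinary injective dimension finite. With these bridges in place the remaining manipulations are formal.
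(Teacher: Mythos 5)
Your part (a) takes a genuinely different and more roundabout route than the paper: you run an induction on $\*\dim R$ driven by Rees' lemma together with the classical change-of-rings criterion for canonical modules, whereas the paper simply observes that the definition of \*canonical module, read at $\mm$, \emph{is} the local characterization of the canonical module of $R_\mm$, then invokes localization for graded primes $\pp\subset\mm$ and Theorem~\ref{depth-type} for the remaining $\pp$. Your induction still has to fall back on Theorem~\ref{depth-type} to handle primes $\pp$ with $\pp^*$ minimal (where no homogeneous non-zerodivisor in $\pp^*$ exists), so the added machinery does not save anything; but with care it can be made to work.

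Part (b) has a genuine gap. Having shown $\*\Hom_R(\omega,\omega')$ is graded free of rank $1$, you pick a homogeneous basis element $\phi$ and declare it ``the required homogeneous isomorphism $\omega\iso\omega'$.'' But $\*\Hom_R(\omega,\omega')\iso R(u)$ for some $u\in\ZZ^r$ that you have no control over, so $\phi$ is homogeneous of degree $-u$: it gives a degree-preserving isomorphism $\omega\iso\omega'(-u)$, not $\omega\iso\omega'$. When $\mm$ is an ordinary maximal ideal this forces $u=0$ and the issue evaporates, but in the general \*local situation (which is precisely what the proposition must cover) one needs a separate argument that the shift by $u$ is harmless. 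The paper supplies it: applying $\*\Ext_R^d(R/\mm,-)$ to $\phi$ yields a homogeneous isomorphism $R/\mm\iso(R/\mm)(-u)$, hence $(R/\mm)_u\neq 0$; by Lemma~\ref{Laurent}, $R/\mm$ is a Laurent polynomial ring, so this produces a homogeneous unit of degree $u$ in $R$, and Lemma~\ref{iso} then gives $\omega'(-u)\iso\omega'$ as graded modules. Your closing paragraph correctly names the case $\mm$ non-maximal as the crux, but the repairs you propose ($\*\Ext=\Ext$, Theorem~\ref{*local}, the $\*\id$ bound) do not touch the shift $u$; without the unit-of-degree-$u$ step the proof of (b) is incomplete.
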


\begin{proof} For (a), we note first that $\omega_\mm$ is the canonical module of $R_\mm$ for the \*maximal ideal $\mm$. Of course, $\omega_\mm$ is Cohen--Macaulay and that it has type $1$ and finite injective dimension is bundled in the definition: the vanishing of the critical $\*\Ext_{R_\mm}^i(R_\mm/\mm R_\mm, \omega_\mm)$ for  $i\neq \dim R_\mm$ implies finite injective dimension, and the isomorphism to $R_\mm/\mm R_\mm$ means type $1$.

It remains to show that $\omega_\pp$ is the canonical module for all prime ideals $\pp$ of $R$. Since $\pp\subset \mm$ if $\pp$ is graded, the localization property of the canonical module yields the claim in this case. For general $\pp$ we use that $\omega_{\pp^*}$ is the canonical module of $R_{\pp^*}$ by what has just been said and that the characterizing  properties of the canonical module can be lifted from $\omega_{\pp^*}$ to $\omega_\pp$, as proved in Theorem \ref{depth-type}.

For the proof of (b), let $\omega'$ be another \*canonical module. As observed above, $\*\Hom_R(\omega, \omega')$ is a projective module of rank 1 (also in the nongraded situation),
and hence, as a graded module, is free; see Theorem \ref{*local}(d). Therefore,
$\*\Hom_R(\omega,\omega')\iso R(u)$ for some $u\in \ZZ^r$. This implies 
$\*\Hom_R(\omega,\omega'(-u))\iso R$. Let $\phi\in\*\Hom_R(\omega,\omega'(-u))$ be an
element corresponding to 1 under this identification. Then,
since $\*\Hom_R(\omega,\omega'(-u))=\Hom_R(\omega,\omega')$,  it follows from (a) that
$\phi$ is an isomorphism of $R$-modules. But then $\phi$ is a homogeneous isomorphism of graded $R$-modules, and  we have
\begin{align*}
R/\mm&\iso\*\Ext_R^d(R/\mm,\omega)\iso
\*\Ext_R^d(R/\mm,\omega'(-u))\\
&\iso\*\Ext_R^d(R/\mm,\omega')(-u)\iso (R/\mm)(-u).
\end{align*}
If $\mm$ is an ordinary maximal ideal, then $R/\mm$ is concentrated in degree $0$, and therefore $u=0$. In general we get that $(R/\mm)_u\neq 0$, and therefore $R$ contains a unit of degree $u$: see Lemma \ref{Laurent} for the structure of $R/\mm$. This implies that $\omega$ and $\omega'$ are isomorphic graded modules by Lemma \ref{iso}.
\end{proof}

\begin{Example}\label{regular}
(a) Let $R = k[t_1^{\pm 1},\dots, t_n^{\pm 1}]$ be a Laurent polynomial ring over a field $k$. Then $\*\dim R = 0$, and evidently $R$ is its own canonical module. 

(b) Let $R$ be a regular \*local ring and $x_1,\dots,x_m$ a regular homogeneous sequence generating the \*maximal ideal $\mm$. Then $R(-\sum_i \deg x_i)$ is the \*canonical module of $R$.

In fact, all homogeneous elements of $R/\mm$ are units, and $R/\mm$ is a Laurent polynomial ring over a field. By (a),  it is its own \*canonical module, and we can apply Rees' lemma \ref{Rees} successively to $x_1,\dots,x_m$ to get the \*canonical module of $R$. Take $N = R/\mm$ and $Q = R$ in Lemma  \ref{Rees}.

Alternatively one can compute $\*\Ext_R^m(R/\mm, R)$ from the Koszul complex for the sequence $x_1,\dots,\allowbreak x_m$ since the Koszul complex is a free resolution of $R/\mm$. (See \cite[Section 1.6]{BH} for an extensive treatment of the Koszul complex.)
\end{Example}

In the preceding proof, we have used Rees' lemma \ref{Rees} in order to compute the \*canonical module of the regular \*local ring $R$ from that of its residue class ring $R/\mm$. Going in the other direction one proves:

\begin{Proposition}\label{residue}
Let $R$ be a \*local Cohen--Macaulay ring with \*canonical module $\omega$, and $x_1,\dots, x_m$ a homogeneous regular $R$-sequence. Then $(\omega/(x_1,\dots,x_m)\omega)(\sum \deg x_i)$ is the \*canonical module of $R/(x_1,\dots,x_m)R$. 
\end{Proposition}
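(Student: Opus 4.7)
The plan is to proceed by induction on $m$. The case $m=0$ is vacuous. For the inductive step, it is enough to handle $m=1$: once we know that $(\omega/x_1\omega)(\deg x_1)$ is the \*canonical module of $R/(x_1)R$, we observe that $x_2,\dots,x_m$ remains a homogeneous regular sequence on $R/(x_1)R$ and on $\omega/x_1\omega$ (standard, since $(x_1,\dots,x_m)$ is $\omega$-regular and the shift does not affect regular sequences), so we may replace $R$ by $R/(x_1)R$ and $\omega$ by $(\omega/x_1\omega)(\deg x_1)$ and apply the induction hypothesis. The individual shifts accumulate to $\sum_{i=1}^m \deg x_i$.

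So we reduce to $m=1$. Write $x=x_1$, $g=\deg x$, $\bar R=R/(x)R$, $\bar\mm = \mm/(x)R$, and $d=\*\dim R$. Before invoking Rees' lemma, note that $x$ is $\omega$-regular: $\omega$ is a maximal Cohen--Macaulay module (this is implicit in the definition via the vanishing of $\*\Ext^i_R(R/\mm,\omega)$ below \*codepth $d$), so its \*depth at $\mm$ equals $d$, and any $R$-regular homogeneous element of $\mm$ is therefore $\omega$-regular. Thus the hypotheses of Lemma \ref{Rees} are satisfied with $N=R/\mm$ and $Q=\omega$, yielding
$$
\*\Ext^{i+1}_R(R/\mm,\omega)\iso \*\Ext^i_{\bar R}(R/\mm,\omega/x\omega)(g)
$$
for all $i$.

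By the defining property of $\omega$, the left-hand side is $R/\mm$ when $i+1=d$ and vanishes otherwise. Using the elementary identity $\*\Ext^i_{\bar R}(M,N)(g)\iso \*\Ext^i_{\bar R}(M,N(g))$ (which follows from the analogous formula for $\*\Hom$ noted right after the definition of $\*\Hom$) and the canonical identification $R/\mm \iso \bar R/\bar\mm$ as graded $\bar R$-modules, we conclude
$$
\*\Ext^i_{\bar R}\bigl(\bar R/\bar\mm,(\omega/x\omega)(g)\bigr)\iso
\begin{cases}
0 & i\neq d-1,\\
\bar R/\bar\mm & i=d-1.
\end{cases}
$$
Since $\bar R$ is Cohen--Macaulay of \*dimension $d-1$, this is precisely the defining property of the \*canonical module. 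Iterating gives the stated shift $\sum_i\deg x_i$ for general $m$.

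The main obstacle is purely bookkeeping: tracking the shift that Rees' lemma introduces and moving it from the outside of $\*\Ext$ to the module argument. The conceptual ingredients — Rees' lemma, $\omega$-regularity of $x$ (via maximal Cohen--Macaulayness of $\omega$), and the drop of \*dimension by $1$ — are all immediate from results already recorded in the excerpt.
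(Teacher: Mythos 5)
Your proof is correct and follows the same route the paper takes: iterate Rees' lemma with $N=R/\mm$ and $Q=\omega$, observing that the \*dimension drops by one at each step, while tracking the shift. The paper's proof is essentially a one-line pointer to this argument, whereas you flesh out the induction, the shift bookkeeping, and (usefully) the fact that the $x_i$ are $\omega$-regular — though note that "$\*\depth_\mm\omega=d$, therefore any $R$-regular element is $\omega$-regular" is slightly compressed: the clean justification is that a maximal Cohen--Macaulay module over a Cohen--Macaulay ring has its associated primes among the minimal primes, so nonzerodivisors on $R$ are nonzerodivisors on $\omega$.
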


\begin{proof}
There is not much to say. Again we take $N$ as $R/\mm$ with the \*maximal ideal $\mm$ and observe that $\*\dim R/(x_1,\dots,x_m) = \*\dim R -m$.
\end{proof}

Example \ref{regular} and Proposition  \ref{residue} demonstrate the usefulness of change of rings in a special case, namely the passage to residue class rings modulo regular sequences. Below we will use  a change of rings argument that works in more general conditions.

We have already used that the ordinary canonical module $\omega$ of a Cohen--Macaulay ring $R$ localizes: for every prime ideal $\pp$ of $R$, the localization $\omega_\pp$ is the canonical module of $R_\pp$. The graded version remains true at least for residue class rings of regular \*local rings: an extra shift is not necessary. This was left open in \cite[3.6.12(a)]{BH}.

\begin{Theorem}\label{Local}
Suppose the \*local Cohen--Macaulay ring $R$ is the residue class ring of a regular \*local ring $S$. Let $c= \*\dim S - \*\dim R$.
\begin{enumerate}
\item[(a)] Then $\omega= \*\Ext_S^c(R,\omega_S)$ is the \*canonical module of $R$.

\item[(b)] $\omega_{(\pp)}$ is the \*canonical module of the homogeneous localization $R_{(\pp)}$ for every prime ideal $\pp$ of $R$ .
\end{enumerate}
\end{Theorem}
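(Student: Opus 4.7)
For part (a), my plan is to verify the defining \*Ext conditions on $\omega$ by a standard change of rings argument that transfers the problem back to $S$. The crucial preliminary is that $\omega_S\cong S(-t)$ is a shifted free $S$-module (Example~\ref{regular}(b), with $t=\sum_{i}\deg x_i$ for a homogeneous regular sequence $x_1,\dots,x_m$ generating $\mm_S$); this reduces the desired vanishing $\*\Ext_S^q(R,\omega_S)=0$ for $q\ne c$ to the same vanishing for $\*\Ext_S^q(R,S)$. Upper vanishing ($q>c$) follows from $\pd_S R=c$, obtained from Theorem~\ref{*local}(c) and Auslander--Buchsbaum applied in $R_{\mm_S}$ (with $\depth R_{\mm_S}=\*\dim R=d$ and $\depth S_{\mm_S}=\*\dim S=d+c$). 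Lower vanishing ($q<c$) follows since $R$ is Cohen--Macaulay, forcing the grade of $I=\Ker(S\twoheadrightarrow R)$ to equal $\hht I=c$. The change of rings spectral sequence
\[
E_2^{p,q}=\*\Ext_R^p(N,\*\Ext_S^q(R,\omega_S))\Rightarrow\*\Ext_S^{p+q}(N,\omega_S)
\]
then degenerates to $\*\Ext_R^i(N,\omega)\cong\*\Ext_S^{i+c}(N,\omega_S)$ for every graded $R$-module $N$. Specializing to $N=R/\mm_R=S/\mm_S$ (the identification holds because $I\subset\mm_S$, so $\mm_R=\mm_S/I$) and invoking that $\omega_S$ is the \*canonical module of the \*dimension-$(d+c)$ ring $S$, the required $\*\Ext$ profile for $\omega$ falls out.

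For part (b), let $\qq$ denote the preimage of $\pp$ in $S$, and apply (a) to the surjection $S_{(\qq)}\twoheadrightarrow R_{(\pp)}=S_{(\qq)}/IS_{(\qq)}$. The hypotheses transfer: $S_{(\qq)}$ is regular \*local by Theorem~\ref{regular_r}, $R_{(\pp)}$ is Cohen--Macaulay by Theorem~\ref{depth-type}, and the codimension is preserved, that is, $\*\dim S_{(\qq)}-\*\dim R_{(\pp)}=c$. This codimension identity is the one subtle point: using that both $R$ and $S$ are Cohen--Macaulay, the dimension formula $\dim R_{\pp^*}+\*\dim R/\pp^*=\*\dim R$ applies in $R_{\mm_R}$, the analogous formula gives $\dim S_{\qq^*}+\*\dim S/\qq^*=\*\dim S$ in $S_{\mm_S}$, and combining with $R/\pp^*=S/\qq^*$ (which follows from $\pp^*=\qq^*/I$) yields $\dim S_{\qq^*}-\dim R_{\pp^*}=c$. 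Hence (a), applied to the localized pair, identifies $\*\Ext_{S_{(\qq)}}^c(R_{(\pp)},\omega_{S_{(\qq)}})$ as the \*canonical module of $R_{(\pp)}$, while localization commutes with $\*\Ext$ on the finite module $R$ to give $\omega_{(\pp)}=\*\Ext_{S_{(\qq)}}^c(R_{(\pp)},(\omega_S)_{(\qq)})$.

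The principal obstacle is to reconcile the two coefficient modules $(\omega_S)_{(\qq)}$ and $\omega_{S_{(\qq)}}$: both are free of rank one over the regular \*local ring $S_{(\qq)}$, so they differ by some shift $u\in\ZZ^r$, and correspondingly $\omega_{(\pp)}$ agrees with the \*canonical module from (a) only up to this shift. To close the gap, I would show that $u$ is the degree of a homogeneous unit in $S_{(\qq)}$ (and hence in $R_{(\pp)}$), so that Lemma~\ref{iso} upgrades the match to a genuine graded isomorphism. Operationally, I would verify directly that $\omega_{(\pp)}$ satisfies the \*Ext characterization of the \*canonical module of $R_{(\pp)}$: via Rees' lemma~\ref{Rees} applied to a homogeneous regular sequence generating $\qq^*S_{(\qq)}$, one computes $\*\Ext_{S_{(\qq)}}^i(S_{(\qq)}/\qq^*S_{(\qq)},(\omega_S)_{(\qq)})$ for each $i$, and the resulting shift on the residue field at the top degree lies in the degree lattice of the Laurent polynomial residue field $S_{(\qq)}/\qq^*S_{(\qq)}$ (Lemma~\ref{Laurent}), so it is a unit degree and is absorbed by Lemma~\ref{iso}.
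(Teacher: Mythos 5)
Your part (a) is essentially the paper's argument: the change of rings spectral sequence collapses once one knows $\*\Ext_S^q(R,\omega_S)=0$ for $q\neq c$, which the paper deduces in one line from the fact that $R$ is a Cohen--Macaulay $S$-module; your reduction to $\*\Ext_S^q(R,S)$ via freeness of $\omega_S$, plus Auslander--Buchsbaum for $q>c$ and grade $=$ height for $q<c$, is the same vanishing expanded in detail. Your part (b) also follows the paper's framework: apply (a) to the surjection $S_{(\qq)}\to R_{(\pp)}$, reduce to showing $(\omega_S)_{(\qq)}\iso\omega_{S_{(\qq)}}$, and observe that the two modules are rank-one free over $S_{(\qq)}$ so that they differ by a shift which must be shown to be a unit degree.

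However, the step where you actually establish that the shift is a unit degree does not work as written and leaves a genuine gap. Computing $\*\Ext_{S_{(\qq)}}^h(S_{(\qq)}/\qq^*S_{(\qq)},(\omega_S)_{(\qq)})$ via Rees' lemma does produce a shifted copy of the residue Laurent ring $k'=S_{(\qq)}/\qq^*S_{(\qq)}$; but the assertion that ``the resulting shift lies in the degree lattice of $k'$'' is \emph{exactly} the statement that the shift is a unit degree, and Lemma~\ref{Laurent} only describes the structure of $k'$ -- it says nothing about whether a given external shift lands in that lattice. You are restating the goal, not proving it. The paper closes this gap with Proposition~\ref{generators}(c): writing $\omega_S=S(-\sum_{i=1}^g\deg x_i)$ for a homogeneous regular sequence $x_1,\dots,x_g$ generating $\nn$, split it so that $x_1,\dots,x_m\in\qq$ while $x_{m+1},\dots,x_g$ become units in $S_{(\qq)}$; then $(\omega_S)_{(\qq)}=S_{(\qq)}(-\sum_{i=1}^m\deg x_i)$, one extends $x_1,\dots,x_m$ to a minimal homogeneous generating system $x_1,\dots,x_m,y_{m+1},\dots,y_h$ of $\qq^*S_{(\qq)}$, and Proposition~\ref{generators}(c) forces $\deg y_j$ to be a unit degree because $y_j$ is not divisible by any $x_i$ ($i\le m$); this is what makes the two shifted free modules agree. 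Your proposal recognizes the right obstacle but does not supply the structural input (the degree semigroup description in Proposition~\ref{generators}(c)) that actually resolves it.
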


\begin{proof}
(a) Let $\nn$ and $\mm$ be the \*maximal ideals of $S$ and $R$, respectively. Then $S/\nn \iso R/\mm$ where the isomorphism is induced by the natural surjection $S\to R$. There is a change of rings spectral sequence
$$
\*\Ext_R^p(R/\mm,\*\Ext_S^q(R,\omega_S))\mathrel{\mathop{\Longrightarrow}\limits_p}
\*\Ext_S^{p+q}(S/\nn,\omega_S);
$$
see \cite[11.66]{Ro} for the nongraded case. By hypothesis, $R$ is a Cohen--Macaulay $S$-module. Therefore $\*\Ext_S^k(R,\omega_S) = 0$ for all $k\neq c$. This implies that the nonzero $E_2^{p,q}$
terms are concentrated in a single column and the spectral sequence
collapses. Setting $\omega=\*\Ext_S^c(R,\omega_S)$, $d = \*\dim R$,  we get
$$
\*\Ext_R^d(R/\mm, \omega) = \*\Ext_S^e(S/\nn, \omega_S) = S/\nn, \qquad e = \*\dim S = d + c.
$$
whereas $\*\Ext_R^k(R/\mm, \omega) = 0$ for all $k\neq d$.

(b) Let $\qq$ be the preimage of $\pp$ in $s$. Then $R_{(\pp)}$ is a residue class ring of $s_{(\qq)}$. By (a), we have
$$
\omega_{R_{(\pp)}} \iso \*\Ext^{c}_{S_{(\qq)}} (R_{(\pp)}, \omega_{S_{(\qq)}}).
$$
It is enough to show $(\omega_S)_{(\qq)} \iso \omega_{S_{(\qq)}}$.

By Example \ref{regular}, 
$$
\omega_S = S\Bigl(-\sum_{i=1}^g \deg x_i\Bigr),
$$
where $x_1,\dots,x_g$ is a homogeneous regular sequence generating the \*maximal ideal $\nn$ of $S$. We can assume that $x_1,\dots,x_m\in\qq$ and $x_{m+1}, \dots, x_g\notin\qq$. Since $x_{m+1}, \dots, x_g\notin\qq$ are units in $S_{(\qq)}$, shifts by their degrees are homogeneous isomorphism over $S_{(\qq)}$. Therefore
$$
(\omega_S)_{(\qq)} = S_{(\qq)}\Bigl(-\sum_{i=1}^m \deg x_i\Bigr).
$$

Set $h = \dim S_\qq$. Then one can complement $x_1,\dots, x_m$ by homogeneous elements $y_{m+1},\dots, y_h$ to a minimal system of generators of $\qq S_\qq$. One has
$$
\omega_{S_{(\qq)}} = S_{(\qq)}\Bigl(-\sum_{i=1}^m \deg x_i - \sum_{j=m+1}^h \deg y_j\Bigr ).
$$
Since the $y_j$ are not divisible by any $x_1,\dots,x_m$, it follows from Proposition \ref{generators} that their degrees are degrees of units. Thus shifts by them are irrelevant. Hence
$$
\omega_{S_{(\qq)}} = S_{(\qq)}\Bigl(-\sum_{i=1}^m \deg x_i\Bigr),
$$
and we are done.
\end{proof}

\section{The canonical module of a normal affine monoid ring}\label{DanStan}

For the theory of affine monoid rings we refer the reader to Bruns and Gubeladze \cite{BG}. We recapitulate the most important facts. An affine monoid $M$ is a finitely generated submonoid (or subsemigroup) of the abelian group $\ZZ^r$ for some $r\ge 0$. Choosing a field $K$ of coefficients, one obtains a commutative finitely generated $K$-algebra $K[M]$. The $K$-basis of $K{M}$ are monomials $X^x$ with exponent vectors $x \in M$.

The $K$-algebra $K[M]$ is normal, i.e., integrally closed in its quotient field, if and only if there is a subgroup $L$ of $\ZZ^r$ and a rational polyhedral cone $C$ such that $M = C \cap L$. It is not a restriction of generality if we assume that $\ZZ^r$ is generated by $M$ as a group. The cone $C$ is the intersection of finitely many half-spaces $H_i$ $i=1,\dots, s$, each of which is defined by an integral linear form $\lambda_i$: $H_i = \bigl\{x\in \RR^r: \lambda_i(x) =0 \}$. The $\lambda_i$ are unique up to scalar multiples (because of our assumption that $M$ generates $\ZZ^r$) , and unique if we choose coprime coefficients for them. 

By Hochster's theorem \cite[6.3.5]{BH}, $K[M]$ is a Cohen--Macaulay ring. For an application of Theorem \ref{Local}, we must write $K[M]$ as the residue class ring of a \*local $\ZZ^r$-graded regular ring. To this end, we set $N =\{x \in M: \lambda_i(x) = 0,\ i=1,\dots,s\}$ and let $T$ be the $K$-algebra $K[N]$. Since $-x\in N$ if and only $x\in N$, $T$ is a Laurent polynomial ring. Let $x_1,\dots,x_m$ be a minimal system of monomials that together with $N$ generates $M$. For indeterminates $Y_1,\dots,Y_m$, the $K$-algebra $K[M]$ is a residue class ring of $S = T[Y_1,\dots,Y_m]$ if we send $Y_i$ to $x_i$. The $ \ZZ^r$-grading on $S$ is obtained by pulling the $\ZZ^r$-grading of $K[M]$ back to $S$. It follows that $S$ is a $\ZZ^r$-graded \*local ring with \*maximal ideal $(Y_1,\dots,Y_m)$. 

Let $\pp_i$ be the $K$-vector space generated by the monomials  $X^y\in M$ with $\lambda_i(y) > 0$, equivalently $\lambda_i(y) \ge 1$. Then $\pp_1,\dots,\pp_s$ are divisorial prime ideals in $K[M]$, and by Chouinard's theorem \cite[4.60]{BG} the $\ZZ^r$-graded divisorial ideals of $K[M]$ are exactly the fractional ideals $\bigcap_{i=1}^s \pp_i^{(h_i)}$ for $h_1,\dots,h_s\in \ZZ$. The symbolic power $\pp_i^{(h_i)}$ is the vector space spanned by the monomials $X^y$, $\lambda_i(y) \ge h_i$. In other words, every divisorial ideal is isomorphic to one of $\bigcap_{i=1}^s \pp_i^{(h_i)}$, and these are pairwise non-isomorphic as $\ZZ^r$-graded fractional ideals.

Now we have all prerequisites  for a divisorial proof of the Theorem of, independently, Danilov and Stanley:

\begin{Theorem}\label{DS_theo}
With the notation and hypotheses introduced, the *canonical module of $R = K[M]$ is the ideal generated by the monomials $X^x$ for which $x$ belongs to the interior of the cone $C$.
\end{Theorem}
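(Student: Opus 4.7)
The plan combines Theorem \ref{Local} with the Chouinard classification recalled in the setup, and finishes with a homogeneous localization at the facet primes $\pp_i$. By Hochster's theorem, $R = K[M]$ is Cohen--Macaulay. The setup presents $R = S/J$ with $S = T[Y_1,\dots,Y_m]$ a regular \*local $\ZZ^r$-graded ring whose \*maximal ideal $(Y_1,\dots,Y_m)$ is generated by the homogeneous regular sequence $Y_1,\dots,Y_m$, so Example \ref{regular}(b) gives $\omega_S = S\bigl(-\sum_{i=1}^m x_i\bigr)$ with $x_i = \deg Y_i$. Theorem \ref{Local}(a) then produces the \*canonical module
$$
\omega_R \;=\; \*\Ext_S^{\,c}(R,\omega_S), \qquad c = \*\dim S - \*\dim R.
$$

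Since $R$ is a normal Cohen--Macaulay domain, $\omega_R$ is a maximal Cohen--Macaulay module of generic rank $1$, hence reflexive, and therefore isomorphic as a $\ZZ^r$-graded $R$-module to a $\ZZ^r$-graded divisorial fractional ideal of $R$. Chouinard's classification then yields integers $h_1,\dots,h_s$ with
$$
\omega_R \;\iso\; \bigcap_{i=1}^s \pp_i^{(h_i)}
$$
as $\ZZ^r$-graded $R$-modules. It remains to show that $h_i = 1$ for every $i$.

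The key step is local. Fix $i$ and homogeneously localize at $\pp_i$. The ring $R_{(\pp_i)}$ is a regular \*local ring of \*dimension $1$ with uniformizer $X^{z_i}$ for any lattice point $z_i \in M$ with $\lambda_i(z_i) = 1$, so Example \ref{regular}(b) gives $\omega_{R_{(\pp_i)}} \iso R_{(\pp_i)}(-z_i)$. For $j \ne i$ the relative interior of the facet $F_i = \{y \in C : \lambda_i(y) = 0\}$ contains lattice points $y$ with $\lambda_j(y) \ge 1$, so $X^y \in \pp_j$ is a unit of $R_{(\pp_i)}$, giving $\pp_j R_{(\pp_i)} = R_{(\pp_i)}$ and hence
$$
\Bigl(\bigcap_{j=1}^s \pp_j^{(h_j)}\Bigr)_{(\pp_i)} \;=\; \pp_i^{(h_i)} R_{(\pp_i)} \;\iso\; R_{(\pp_i)}(-h_i z_i)
$$
as $\ZZ^r$-graded modules. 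By Theorem \ref{Local}(b) this must be isomorphic to $\omega_{R_{(\pp_i)}} \iso R_{(\pp_i)}(-z_i)$, and Lemma \ref{iso} then requires $(1 - h_i) z_i$ to be the degree of a unit in $R_{(\pp_i)}$; since unit degrees lie in $\ker \lambda_i$, applying $\lambda_i$ forces $1 - h_i = 0$. Hence $\omega_R \iso \bigcap_{i=1}^s \pp_i^{(1)}$, which is precisely the ideal generated by the monomials $X^x$ with $x$ in the interior of the cone $C$. The main obstacle is the local analysis at each $\pp_i$: identifying $R_{(\pp_i)}$ as a regular \*local ring of \*dimension one with an explicit monomial uniformizer, and verifying that the other facet primes become the unit ideal upon localization---the latter being the only place where the geometry of the cone $C$ beyond a single facet is actually used.
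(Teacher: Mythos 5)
Your proof is correct and follows essentially the same route as the paper: Hochster gives Cohen--Macaulayness, the setup realizes $R$ as a quotient of a regular \*local ring $S$ so that Theorem~\ref{Local} applies, Chouinard identifies $\omega$ with $\bigcap_i \pp_i^{(h_i)}$, and then the homogeneous localization at each facet prime $\pp_i$ (where $R_{(\pp_i)}$ is a one-dimensional regular \*local ring, i.e., $K[V,W_1^{\pm 1},\dots,W_{r-1}^{\pm 1}]$) together with Theorem~\ref{Local}(b) forces $h_i=1$. The only places you are slightly more explicit than the paper are (i) justifying that $\omega$ is a graded divisorial ideal via reflexivity of a rank-one MCM module over the normal domain $R$, and (ii) naming a monomial uniformizer $X^{z_i}$ with $\lambda_i(z_i)=1$ and running the shift comparison through Lemma~\ref{iso}; the paper leaves both implicit. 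One small imprecision: you take $z_i \in M$, but what you actually need is a degree $z_i$ with $\lambda_i(z_i)=1$ in the degree group of $R_{(\pp_i)}$; such a $z_i$ can indeed be found in $M$ (push a relative interior point of the facet $F_i$ far enough into $C$ and add an arbitrary $w\in\ZZ^r$ with $\lambda_i(w)=1$), but this deserves a word since $\lambda_i$ having coprime coefficients only guarantees surjectivity on $\ZZ^r$, not on $M$.
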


\begin{proof}
The $K$-basis of $\pp_1\cap\dots\cap \pp_s$ are the monomials $X^x$ for which $x$ belongs to the interior of the cone $C$. The \*canonical module $\omega$ is $\ZZ^r$-graded and a divisorial ideal. Thus $\omega = \bigcap_{i=1}^s \pp_i^{(h_i)}$ for uniquely determined integers $h_i$. Since $\pp_j^{(h_j)}R_{(\pp_i)} = R_{(\pp_i)}$ for $i\neq j$, it follows from Theorem \ref{Local} that $\omega_{\pp_i} = \pp_i^{(h_i)}R_{(\pp_i)}$. But $R_{(\pp_i)}$ is isomorphic to a Laurent polynomial ring $K[V,W_1^{\pm 1},\dots, W_{r-1}^{\pm 1} ]$, whence $H_i = 1$. It only remains to observe that $\pp_1\cap\dots\cap \pp_s$ is the ``interior'' ideal.
\end{proof}

In \cite[6.3.5]{BH} Theorem \ref{DS_theo} and Hochster's theorem are proved together by the computation of local cohomology. A divisorial approach to Theoerem \ref{DS_theo} has been used in \cite{BG}. The treatment of the \*canonical module in Sections \ref{canonical} and \ref{DanStan} is more conceptual, however.

\end{document}